\newtheorem{theo}{Theorem}[section]
\newtheorem{lem}{Lemma}[section]
\newtheorem{rem}{Remark}[section]
\newtheorem{open problem}{Open problem}[section]
\newcommand{\ol}{\overline}
\newcommand{\be}{\begin{equation}}
\newcommand{\ee}{\end{equation}}
\newcommand{\bs}{\begin{small}}
\newcommand{\es}{\end{small}}
\newcommand{\beas}{\begin{eqnarray*}}
\newcommand{\eeas}{\end{eqnarray*}}
\newcommand{\bea}{\begin{eqnarray}}
\newcommand{\eea}{\end{eqnarray}}
\renewcommand{\epsilon}{\varepsilon}
\numberwithin{equation}{section}
\begin{document}
\title[On certain subclasses]{On certain subclasses of analytic and harmonic mappings}
\author[R. Biswas]{Raju Biswas}
\date{}
\address{Department of Mathematics, Raiganj University, Raiganj, West Bengal-733134, India.}
\email{rajubiswasjanu02@gmail.com}
\maketitle
\let\thefootnote\relax
\footnotetext{2020 Mathematics Subject Classification: 30C45, 30C50, 30C80, 31A05.}
\footnotetext{Key words and phrases: Harmonic function, Starlikeness, Coefficient estimate, Growth theorem, Logarithmic inverse coefficients, Hankel determinant.}
\begin{abstract} Let $\mathcal{H}$ be the class of harmonic functions $f=h+\ol g$ in the unit disk $\mathbb{D}:=\{z\in\mathbb{C}:|z|<1\}$, where $h$ and $g$ are analytic in 
$\mathbb{D}$ with the normalization $h(0)=g(0)=h'(0)-1=0$. Let $\mathcal{D}_{\mathcal{H}}^0(\alpha, M)$ denote the class of functions $f=h+\ol g\in\mathcal{H}$ satisfying the 
conditions $\left|(1-\alpha)h'(z)+\alpha zh''(z)-1+\alpha\right|\leq M+\left|(1-\alpha)g'(z)+\alpha zg''(z)\right|$ with $g'(0)=0$ for $z\in\mathbb{D}$, $M>0$ and $\alpha\in(0,1]$. In this paper, we investigate fundamental properties for functions in 
the class $\mathcal{D}_{\mathcal{H}}^0(\alpha, M)$, such as the coefficient bounds, 
growth estimates, starlikeness and some other properties. Furthermore, we obtain the sharp bound of the second Hankel determinant of inverse logarithmic coefficients for normalized analytic univalent functions $f\in\mathcal{P}(M)$ in $\mathbb{D}$ satisfying the condition $\text{Re}\left(zf''(z)\right)>-M$ for $0<M\leq 1/\log4$ and $z\in\mathbb{D}$.
\end{abstract}
\section{introduction}
\noindent Harmonic mappings are a useful tool in the study of fluid flow problems (see \cite{AC2014}). In addition, planar fluid dynamics problems naturally give rise to univalent harmonic functions 
with special geometric properties such as convexity, starlikeness and close-to-convexity. Univalent harmonic functions are also used in the representation of minimal surfaces. For 
example, Heinz \cite{H1} used such mappings in the study of the Gaussian
curvature of nonparametric minimal surfaces over the unit disc (see \cite[p. 182, section 10.3]{24P}) and Aleman {\it et al.} \cite[Theorem 4.5]{AC2014} considered a fluid flow problem on a convex domain $\Omega$ satisfying an interesting geometric property. After this brief motivation, we will now focus on univalent harmonic mappings.\\[2mm]
\indent Let $f=u+iv$ be a complex-valued function of $z=x+i y$ in a simply connected domain $\Omega$. If $f\in C^2(\Omega)$ (continuous first and second partial derivatives in $\Omega$) and  satisfies the Laplace equation $\Delta f =4f_{z\ol z} = 0$ in $\Omega$, 
then $f$ is said to be harmonic in $\Omega$.  Note that every harmonic mapping $f$ has the canonical representation $f = h + \ol g$, where $h$ and 
$g$ are analytic in $\Omega$, known respectively as the analytic and co-analytic parts of $f$, and $\ol{g(z)}$ denotes the complex conjugate of $g(z)$. The Jacobian of $f$ is defined by $J_f(z):=|h'(z)|^2-|g'(z)|^2$. The inverse function theorem and a result of Lewy \cite{L1936} shows that a harmonic function $f$ is locally univalent in 
$\Omega$ if, and only if, the Jacobian  of $f$, defined by $J_f(z):=|h'(z)|^2-|g'(z)|^2$ is non-zero in $\Omega$.  A locally univalent harmonic function $f$ is said to be sense-preserving if $J_f(z)>0$  in $D$ and sense-reversing if $J_f(z)<0$ in $D$ (see \cite{17P,4P,24P,V1}). Let $\mathcal{H}$ be the class of all complex-valued harmonic 
functions $f=h+\ol g$ defined in $\mathbb{D}$, where $h$ and $g$ are analytic in $\mathbb{D}$ with the normalization $h(0)=h'(0)-1=0$ and $g(0)=0$. If the co-analytic part $g(z)\equiv 0$ 
in $\mathbb{D}$, then the class $\mathcal{H}$ reduces to the class $\mathcal{A}$ of analytic functions in $\mathbb{D}$ with $f(0)=0$ and $f'(0)=1$.
Let $\mathcal{S}_{\mathcal{H}}$ denote the subclass of $\mathcal{H}$ that are sense-preserving and univalent in $\mathbb{D}$ and let 
$\mathcal{S}_{\mathcal{H}}^0=\left\{f=h+\ol g\in\mathcal{S}_{\mathcal{H}}: g'(0)=0 \right\}$. The analytic and co-analytic parts of every $f=h+\ol g\in\mathcal{S}_{\mathcal{H}}^0$ have the following forms:
\bea\label{re1}h(z)=z+\sum_{n=2}^\infty a_nz^n\;\;\text{and}\;\;g(z)=\sum_{n=2}^\infty b_nz^n. \eea
If $g(z)\equiv 0$ in $\mathbb{D}$, then both the classes $\mathcal{S}_{\mathcal{H}}$ and $\mathcal{S}_{\mathcal{H}}^0$ reduces to the class 
$\mathcal{S}$ of analytic and univalent functions in $\mathbb{D}$ with $f(0)=f'(0)-1=0$. Both $\mathcal{S}_{\mathcal{H}}$ and $\mathcal{S}_{\mathcal{H}}^0$ are natural harmonic 
generalizations of $\mathcal{S}$, but only $\mathcal{S}_{\mathcal{H}}^0$ is known to be compact although both $\mathcal{S}_{\mathcal{H}}$ and $\mathcal{S}_{\mathcal{H}}^0$ are normal. In 1984, Clunie and Sheil-Small \cite{4P} undertook a comprehensive study of the class $\mathcal{S}_{\mathcal{H}}$ and its geometric subclasses. This study 
has subsequently garnered extensive attention from researchers (see \cite{VV1,V4,3P,7P,15P, MBG2024, BM2025}). \\[2mm]
\indent  A domain $\Omega$ is called starlike with respect to a point $z_0\in\Omega$ if the line segment joining $z_0$ to any point in $\Omega$ lies in $\Omega$. In particular, if 
$z_0=0$, then $\Omega$ is simply called starlike. A complex-valued harmonic mapping $f\in\mathcal{H}$ is said to be starlike if $f(\mathbb{D})$ is starlike. We denote the class of 
harmonic starlike functions in $\mathbb{D}$ by $\mathcal{S}_{\mathcal{H}}^*$. A domain $\Omega$ is called convex if it is starlike with respect to every point in $\Omega$. A function 
$f\in\mathcal{H}$ is said to be convex if $f(\mathbb{D})$ is convex. The class of all harmonic convex mappings in $\mathbb{D}$ is denoted by $\mathcal{K}_{\mathcal{H}}$.
Starlikeness is a hereditary property for conformal mappings.
Thus if $f$ is analytic and univalent in $\mathbb{D}$ with $f(0)=0$ and if $f$ maps $\mathbb{D}$ onto a domain that is starlike with respect to the origin, then the image of every subdisk
$|z|<r<1$ is also starlike with respect to the origin. 
Again, this 
hereditary property does not generalize to harmonic mappings, which is being discussed in \cite{17P}.\\[2mm] 
 Let $\mathcal{R}$ be the class of all analytic functions $h$ in $\mathbb{D}$ such that $h(0)=h'(0)-1=0$ and $\textrm{Re}\left(h'(z)\right)>0$ in $\mathbb{D}$.
It is well-known that $\mathcal{R}\subsetneq\mathcal{S}$. MacGregor \cite{M1962} proved that if $h\in\mathcal{R}$, then each partial sum $s_n(h)=\sum_{k=0}^n a_k z^k$ is 
univalent in $|z|<1/2$ for $n\geq 2$ and $h(z)$ maps the disk $|z|<\sqrt{2}-1$ onto a convex domain. The numbers $1/2$ and $\sqrt{2}-1$ are the best possible constants. In 
\cite{S1970}, Singh proved that if $h\in\mathcal{R}$, then each partial sum $s_n(h)$ is convex in $|z| < 1/4$ and the number $1/4$ is the best possible constant. \\[2mm]
\indent In 2013, Ponnusamy {\it et al.} \cite{PYY2013} studied the following class as a harmonic analog of the class $\mathcal{R}$: 
\beas\mathcal{P}_{\mathcal{H}}:=\left\{f=h+\ol{g}\in\mathcal{H}: \textrm{Re}\left(h'(z)\right)>|g'(z)|\quad\text{in}\quad\mathbb{D}\right\} \eeas
and $\mathcal{P}_{\mathcal{H}}^0:=\left\{f=h+\ol{g}\in\mathcal{P}_{\mathcal{H}}: g'(0)=0\right\}$. The authors of \cite{PYY2013} proved that functions in $\mathcal{P}_{\mathcal{H}}$ are close-to-convex in $\mathbb{D}$. In \cite{1LP2013} and \cite{2LP2013}, Li and Ponnusamy have investigated the radius of univalency and convexity of sections of functions $f\in \mathcal{P}_{\mathcal{H}}^0$, respectively.\\[2mm]
\indent In 2020, Ghosh and Allu \cite{16P} established the coefficient bound problem and the growth theorem for functions in the class
\beas\mathcal{P}_{\mathcal{H}}^0(M)=\{h+\ol{g}\in\mathcal{H}: \text{Re}\left(zh''(z)\right)>-M+|zg''(z)|\;\text{with}\; g'(0)=0\;\text{for}\;M>0, z\in\mathbb{D}\}\eeas
and a two-point distortion theorem for functions in the class
\beas\mathcal{B}_{\mathcal{H}}^0(M)=\{h+\ol{g}\in\mathcal{H}: \left|zh''(z)\right|\leq M-|zg''(z)|\;\text{with}\; g'(0)=0\;\text{for}\;M>0, z\in\mathbb{D}\}.\eeas
The subclasses $\mathcal{B}_{\mathcal{H}}^0(M)$ and $\mathcal{P}_{\mathcal{H}}^0(M)$ are not only the generalizations of analytic functions but also they are closely related to the 
analytic subclasses $\mathcal{B}(M)$ and $\mathcal{P}(M)$ respectively and the classes are defined by
\bea\label{k1}\left\{\begin{array}{lll}
\mathcal{P}(M)=\{h\in\mathcal{A}: \text{Re}\left(zh''(z)\right)>-M\;\text{for}\;M>0, z\in\mathbb{D}\},\\[2mm]
\mathcal{B}(M)=\{h\in\mathcal{A}: \left|zh''(z)\right|\leq M\;\text{for}\;M>0, z\in\mathbb{D}\}.\end{array}\right. \eea
The classes mentioned in (\ref{k1}) have been studied by Mocanu \cite{334}, and Ponnusamy and Singh \cite{332}. In 1995, Ali {\it et al.}\cite{333} proved that each function in the class $\mathcal{P}(M)$ 
is univalent and starlike in the unit disk $\mathbb{D}$ for $0<M\leq 1/\log 4 (\approx 0.7213475)$. Afterwards, Ponnusamy and Singh \cite{332} showed that each function in the class $\mathcal{B}(M)$ are univalent and starlike whenever $0<M\leq 1$ and convex whenever $0<M\leq 1/2$.\\[2mm] 
\indent Motivated by the results of \cite{PYY2013, 1LP2013, 2LP2013} and the class $\mathcal{B}_{\mathcal{H}}^0(M)$, in this paper,
we consider the class $\mathcal{D}_{\mathcal{H}}^0(\alpha, M)$ of all functions $f=h+\ol{g}\in\mathcal{H}$ for $M>0, \alpha\in(0,1]$ that satisfy the following conditions:
\beas\left|(1-\alpha)h'(z)+z\alpha h''(z)-(1-\alpha)\right|\leq M-\left|(1-\alpha)g'(z)+\alpha zg''(z)\right|\quad\text{with}~ g'(0)=0\eeas
for $z\in\mathbb{D}$. It is evident that $\mathcal{D}_{\mathcal{H}}^0(1, M)=\mathcal{B}_{\mathcal{H}}^0(M)$.\\
\noindent The organization of this paper is: In section $2$, we establish the sharp coefficients bounds, growth results, starlikeness and some other properties for functions 
in $\mathcal{D}_{\mathcal{H}}^0(\alpha, M)$. In section $5$, we obtain the sharp bound for the second Hankel determinant of logarithmic inverse coefficients for functions in the class 
$\mathcal{P}(M)$. The remaining sections contain introductions and key lemmas.
\section{fundamental properties }
In the following result, we obtain the sharp coefficient bounds for functions in the class $\mathcal{D}_{\mathcal{H}}^0(\alpha, M)$.
\begin{theo}\label{T2} Let $M> 0$, $\alpha\in(0, 1]$ and $f=h+\ol{g}\in\mathcal{D}_{\mathcal{H}}^0(\alpha, M)$  be of the form (\ref{re1}). For $n\geq 2$, we have $|a_n|\leq M/\left(n+(n^2-2n)\alpha\right)$ and $|b_n|\leq M/\left(n+(n^2-2n)\alpha\right)$.
The result is sharp for the functions $f_1$ and $f_2$, where the functions are given by $f_1(z)=z+Mz^n/\left(n+(n^2-2n)\alpha\right)$ and $f_2(z)=z+M\ol{z^n}/\left(n+(n^2-2n)\alpha\right)$ for $n\geq 2$. \end{theo}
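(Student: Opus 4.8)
The plan is to reduce the defining inequality to two separate sup-norm bounds on auxiliary analytic functions and then invoke the classical Cauchy coefficient estimate. Set $\lambda_n := n+(n^2-2n)\alpha = n\bigl(1+(n-2)\alpha\bigr)$ for $n\geq 2$; since $\alpha\in(0,1]$ we have $\lambda_n\geq n>0$, so division by $\lambda_n$ is legitimate. Writing $h(z)=z+\sum_{n\geq 2}a_nz^n$ and $g(z)=\sum_{n\geq 2}b_nz^n$ and differentiating termwise, a short computation gives
\[
(1-\alpha)h'(z)+\alpha zh''(z)-(1-\alpha)=\sum_{n=2}^{\infty}\lambda_n a_n z^{n-1}=:\phi(z),
\]
\[
(1-\alpha)g'(z)+\alpha zg''(z)=\sum_{n=2}^{\infty}\lambda_n b_n z^{n-1}=:\psi(z),
\]
both analytic on $\mathbb{D}$ with $\phi(0)=\psi(0)=0$. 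In this notation the condition $f\in\mathcal{D}_{\mathcal{H}}^0(\alpha,M)$ reads $|\phi(z)|+|\psi(z)|\leq M$ for every $z\in\mathbb{D}$.

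Next, because $|\phi(z)|\geq 0$ and $|\psi(z)|\geq 0$, this single inequality forces $|\phi(z)|\leq M$ and $|\psi(z)|\leq M$ throughout $\mathbb{D}$ simultaneously. I would then apply the standard Cauchy estimate: if $\Phi(z)=\sum_{k\geq 1}c_kz^{k}$ is analytic on $\mathbb{D}$ with $|\Phi|\leq M$, then for each $r\in(0,1)$ Parseval on $|z|=r$ yields $|c_k|^2 r^{2k}\leq M^2$, whence $|c_k|\leq M$ on letting $r\to 1^-$. Applying this to $\phi$ gives $|\lambda_n a_n|\leq M$, and to $\psi$ gives $|\lambda_n b_n|\leq M$; dividing by $\lambda_n$ yields $|a_n|\leq M/\bigl(n+(n^2-2n)\alpha\bigr)$ and $|b_n|\leq M/\bigl(n+(n^2-2n)\alpha\bigr)$.

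For sharpness I would check membership of the extremal functions directly. For $f_1(z)=z+Mz^n/\lambda_n$ one has $\psi\equiv 0$ and $\phi(z)=Mz^{n-1}$, so $|\phi(z)|+|\psi(z)|=M|z|^{n-1}\leq M$ on $\mathbb{D}$; for $f_2(z)=z+M\overline{z^n}/\lambda_n$ the roles of $\phi$ and $\psi$ are interchanged, and in both cases $g'(0)=0$ since $n\geq 2$. The $n$-th analytic coefficient of $f_1$ equals $M/\lambda_n$ and the $n$-th co-analytic coefficient of $f_2$ equals $M/\lambda_n$, so the bounds are attained.

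I do not expect a genuine obstacle here: the entire content is the termwise computation that isolates the factor $\lambda_n=n+(n^2-2n)\alpha$ from $(1-\alpha)h'+\alpha zh''$ (and likewise for $g$), together with the elementary remark that the combined inequality splits into two independent bounds $|\phi|\leq M$ and $|\psi|\leq M$. The only small points needing attention are the limiting argument $r\to 1^-$ in the Cauchy estimate and the verification $\lambda_n\neq 0$ for $n\geq 2$, $\alpha\in(0,1]$, which legitimizes the division.
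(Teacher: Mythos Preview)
Your proof is correct and follows essentially the same path as the paper: both compute the series expansions of $(1-\alpha)h'+\alpha zh''-(1-\alpha)$ and $(1-\alpha)g'+\alpha zg''$, use the defining inequality to bound each by $M$ in modulus, extract the coefficient bound via a Cauchy/Parseval estimate on circles $|z|=r$ followed by $r\to 1^-$, and verify sharpness with the stated extremal functions. The only cosmetic difference is that the paper keeps the full inequality $|\phi|\le M-|\psi|$ inside the Cauchy integral before discarding the $|\psi|$ term, whereas you split into $|\phi|\le M$ and $|\psi|\le M$ first; this changes nothing.
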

\begin{proof} As $f=h+\ol{g}\in \mathcal{D}_{\mathcal{H}}^0(\alpha, M)$, we have 
\bea\label{re2}  \left|(1-\alpha)h'(z)+z\alpha h''(z)-(1-\alpha)\right|\leq M -\left|(1-\alpha)g'(z)+\alpha zg''(z)\right|~\text{for}~ z\in\mathbb{D}. \eea
Since $(1-\alpha)h'(z)+z\alpha h''(z)-(1-\alpha)=\sum_{n=2}^\infty \left(n+(n^2-2n)\alpha\right)a_nz^{n-1}$ is analytic in $\mathbb{D}$, then in view of Cauchy's integral formula for derivatives, we have 
\beas \left(n+(n^2-2n)\alpha\right) a_n=\frac{1}{2\pi i}\int_{|z|=r}\frac{(1-\alpha)h'(z)+z\alpha h''(z)-(1-\alpha)}{z^{n}}dz.\eeas
Therefore, we have
\beas \left(n+(n^2-2n)\alpha\right) |a_n|&=&\left|\frac{1}{2\pi i}\int_0^{2\pi}\frac{(1-\alpha)h'(re^{i\theta})+\alpha re^{i\theta} h''(re^{i\theta})-(1-\alpha)}{r^{n}e^{in\theta}}ir e^{i\theta}d\theta\right|\\[2mm]
&\leq&\frac{1}{2\pi}\int_0^{2\pi}\frac{\left|(1-\alpha)h'(re^{i\theta})+\alpha re^{i\theta}h''(re^{i\theta})-(1-\alpha)\right|}{r^{n-1}}d\theta.\eeas
From (\ref{re2}), we have 
\beas \left(n+(n^2-2n)\alpha\right) r^{n-1}|a_n|&\leq&\frac{1}{2\pi}\int_0^{2\pi}\left(M-\left|(1-\alpha)g'(re^{i\theta})+\alpha re^{i\theta}g''(re^{i\theta})\right|\right)d\theta\\[2mm]
&\leq&M-\left|\frac{1}{2\pi}\int_0^{2\pi}\left(g'(re^{i\theta})+re^{i\theta}g''(re^{i\theta})\right)d\theta\right|=M.\eeas
Letting $r\to 1^-$ gives the desired bound $|a_n|\leq M/\left(n+(n^2-2n)\alpha\right)$. Using similar argument as above, we obtain $|b_n|\leq M/\left(n+(n^2-2n)\alpha\right)$ for $n\geq 2$. It is evident that 
$f_1(z)=z+Mz^n/\left(n+(n^2-2n)\alpha\right)$ and $f_2(z)=z+M\ol{z^n}/\left(n+(n^2-2n)\alpha\right)$ $(n\geq2)$ are in the class $\mathcal{D}_{\mathcal{H}}^0(\alpha, M)$ with $|a_n(f_1)|=M/\left(n+(n^2-2n)\alpha\right)$ and $|b_n(f_2)|=M/\left(n+(n^2-2n)\alpha\right)$. 
This completes the proof.
\end{proof}
\begin{rem} Setting $\alpha=1$ in \textrm{Theorem \ref{T2}} gives \textrm{Theorem 2.2} of \cite{GA2017}.\end{rem}
\noindent Let us consider the class $\mathcal{D}(\alpha, M)$ of all functions $\phi\in \mathcal{A}$ satisfying the following condition:
\beas \left|(1-\alpha)\phi'(z)+\alpha z\phi''(z)-(1-\alpha)\right|\leq M\quad \text{for}\quad M >0, \;\alpha\in(0, 1]\quad \text{and}\quad z\in\mathbb{D}.\eeas
The following result gives a correlation between the functions in the classes $\mathcal{D}(\alpha, M)$ and $\mathcal{D}_{\mathcal{H}}^0(\alpha, M)$.
\begin{theo}\label{T1}
The harmonic map $f=h+\ol{g}$ belongs to $\mathcal{D}_{\mathcal{H}}^0(\alpha, M)$ if, and only if, the
function $F_\epsilon=h +\epsilon g$ belongs to $\mathcal{D}(\alpha, M$) for each $\epsilon$ with $|\epsilon|=1$.
\end{theo}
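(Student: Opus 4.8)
The plan is to reduce the equivalence to a pointwise statement about two complex numbers, exploiting the linearity of the differential operator involved. Write $\Lambda_\alpha[\phi](z):=(1-\alpha)\phi'(z)+\alpha z\phi''(z)$; this is $\mathbb{C}$-linear in $\phi$ and maps the identity map $z\mapsto z$ to the constant $1-\alpha$. Put $P(z):=\Lambda_\alpha[h](z)-(1-\alpha)$ and $Q(z):=\Lambda_\alpha[g](z)=(1-\alpha)g'(z)+\alpha zg''(z)$. Then the defining inequality of $\mathcal{D}_{\mathcal{H}}^0(\alpha,M)$ is precisely $|P(z)|\le M-|Q(z)|$ for all $z\in\mathbb{D}$ (together with the normalization $g'(0)=0$), while linearity gives $\Lambda_\alpha[F_\epsilon](z)-(1-\alpha)=P(z)+\epsilon Q(z)$, so that $F_\epsilon\in\mathcal{D}(\alpha,M)$ means exactly $|P(z)+\epsilon Q(z)|\le M$ for all $z\in\mathbb{D}$ (together with $F_\epsilon\in\mathcal{A}$). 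Observe moreover that $F_\epsilon(0)=h(0)+\epsilon g(0)=0$ and $F_\epsilon'(0)=h'(0)+\epsilon g'(0)=1$ as soon as $g'(0)=0$, so $F_\epsilon\in\mathcal{A}$; conversely, $F_\epsilon\in\mathcal{A}$ for every unimodular $\epsilon$ forces $h(0)=g(0)=0$, $h'(0)=1$ and $g'(0)=0$ (compare the two choices $\epsilon=\pm1$).

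For the forward direction I would assume $f\in\mathcal{D}_{\mathcal{H}}^0(\alpha,M)$ and simply invoke the triangle inequality: for every $\epsilon$ with $|\epsilon|=1$,
\[
|P(z)+\epsilon Q(z)|\le |P(z)|+|\epsilon|\,|Q(z)|=|P(z)|+|Q(z)|\le M\qquad(z\in\mathbb{D}),
\]
and together with $F_\epsilon\in\mathcal{A}$ this yields $F_\epsilon\in\mathcal{D}(\alpha,M)$.

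For the converse I would assume $F_\epsilon\in\mathcal{D}(\alpha,M)$ for every unimodular $\epsilon$; as noted above this already gives $f\in\mathcal{H}$ with $g'(0)=0$. Fix $z\in\mathbb{D}$. If $Q(z)=0$ then $|P(z)|\le M$ is immediate from any admissible $\epsilon$. If $Q(z)\ne 0$, choose the unimodular $\epsilon=\epsilon(z)$ for which $\epsilon Q(z)$ is a nonnegative multiple of $P(z)$ (any unimodular $\epsilon$ will do when $P(z)=0$); for this choice $|P(z)+\epsilon Q(z)|=|P(z)|+|Q(z)|$, and the hypothesis gives $|P(z)|+|Q(z)|=|P(z)+\epsilon Q(z)|\le M$, i.e. $|P(z)|\le M-|Q(z)|$. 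Since $z\in\mathbb{D}$ was arbitrary, $f\in\mathcal{D}_{\mathcal{H}}^0(\alpha,M)$.

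I do not expect a genuine obstacle here: once the problem is rephrased through $P$ and $Q$, everything is elementary. The only point that needs a word of care is that in the converse the maximizing $\epsilon$ depends on the point $z$, which is legitimate precisely because $F_\epsilon\in\mathcal{D}(\alpha,M)$ is assumed for all unimodular $\epsilon$ simultaneously. This is the harmonic-to-analytic passage in the spirit of Clunie and Sheil-Small, and the same device will be convenient later for transferring growth and starlikeness information from $\mathcal{D}(\alpha,M)$ to $\mathcal{D}_{\mathcal{H}}^0(\alpha,M)$.
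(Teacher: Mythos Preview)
Your proof is correct and follows essentially the same approach as the paper: both directions reduce to the elementary identity $\sup_{|\epsilon|=1}|P+\epsilon Q|=|P|+|Q|$, with the forward direction using the triangle inequality and the converse handled by choosing the maximizing $\epsilon$ pointwise in $z$. Your treatment is slightly more careful in packaging the differential operator as a linear map $\Lambda_\alpha$ and in explicitly recovering the normalization $g'(0)=0$ from $F_\epsilon\in\mathcal{A}$ for all unimodular $\epsilon$, but the substance is identical to the paper's argument.
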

\begin{proof} Let $f=h+\ol{g}\in \mathcal{D}_{\mathcal{H}}^0(\alpha, M)$. Therefore, 
\beas \left|(1-\alpha)h'(z)+\alpha zh''(z)-(1-\alpha)\right|\leq M-\left|(1-\alpha)g'(z)+\alpha zg''(z)\right|\quad\text{for}\quad z\in\mathbb{D}. \eeas
Fix $|\epsilon|=1$. Since $F_\epsilon=h +\epsilon g$, thus, we have
\beas &&\left|(1-\alpha)F_{\epsilon}'(z)+\alpha zF_{\epsilon}''(z)-(1-\alpha)\right|\\[2mm]
&=&\left|\left((1-\alpha)h'(z)+\alpha zh''(z)-(1-\alpha)\right)+\epsilon\left((1-\alpha)g'(z)+\alpha zg''(z)\right)\right|\\[2mm]
&\leq&\left|(1-\alpha)h'(z)+\alpha zh''(z)-(1-\alpha)\right|+\left|(1-\alpha)g'(z)+\alpha zg''(z)\right|\leq M\;\;\text{for} \;z\in\mathbb{D},\eeas
which shows that $F_\epsilon=h +\epsilon g\in\mathcal{D}(\alpha, M)$ for each $\epsilon$ with $|\epsilon|=1$. Conversely, if $F_\epsilon\in\mathcal{D}(\alpha, M)$, for $z\in\mathbb{D}$, we have
\beas&& \left|(1-\alpha)F_{\epsilon}'(z)+\alpha zF_{\epsilon}''(z)-(1-\alpha)\right|\leq M,\\[1mm]\text{\it i.e.,}
&&\left|\left((1-\alpha)h'(z)+\alpha zh''(z)-(1-\alpha)\right)+\epsilon\left((1-\alpha)g'(z)+\alpha zg''(z)\right)\right|\leq M.\eeas
Since $\epsilon$ $(|\epsilon|=1)$ is arbitrary, for an appropriate choice of $\epsilon$, we have
\beas \left|(1-\alpha)h'(z)+\alpha zh''(z)-(1-\alpha)\right|+\left|(1-\alpha)g'(z)+\alpha zg''(z)\right|\leq M\;\;\text{for} \;z\in\mathbb{D}, \eeas
which shows that $f\in\mathcal{D}_{\mathcal{H}}^0(\alpha, M)$. This completes the proof.
 \end{proof}
In the following result, we establish the sharp growth estimates for functions in the class $\mathcal{D}_{\mathcal{H}}^0(\alpha, M)$. 
\begin{theo}\label{T3} Let $M>0$, $\alpha\in(0, 1]$ and $f=h+\ol{g}\in \mathcal{D}_{\mathcal{H}}^0(\alpha, M)$ be of the form (\ref{re1}). Then, 
\bea\label{req1}|z|-\frac{M|z|^2}{2}\leq |f(z)|\leq |z|+\frac{M|z|^2}{2}.\eea
For each $z\in\mathbb{D}$, $z\not=0$, equality occurs for the function $f$ given by $f(z)=z+Mz^2/2$ or its suitable rotations.\end{theo}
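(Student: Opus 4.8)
The plan is to reduce (\ref{req1}) to the single estimate $|f(z)-z|\le M|z|^2/2$, which then gives both inequalities by the triangle inequality and its reverse form. Set $\omega(z):=(1-\alpha)h'(z)+\alpha zh''(z)-(1-\alpha)$ and $G(z):=(1-\alpha)g'(z)+\alpha zg''(z)$; both are analytic in $\mathbb{D}$ and vanish at the origin (since $h'(0)=1$ and $g'(0)=0$), and the hypothesis $f\in\mathcal{D}_{\mathcal{H}}^0(\alpha,M)$ is precisely the pointwise bound $|\omega(z)|+|G(z)|\le M$ on $\mathbb{D}$.

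First I would improve this to $|\omega(z)|+|G(z)|\le M|z|$ on $\mathbb{D}$. For every $\lambda\in\mathbb{C}$ with $|\lambda|=1$ the function $\omega+\lambda G$ is analytic in $\mathbb{D}$, vanishes at $0$, and satisfies $|\omega(z)+\lambda G(z)|\le|\omega(z)|+|G(z)|\le M$, so Schwarz's lemma applied to $(\omega+\lambda G)/M$ gives $|\omega(z)+\lambda G(z)|\le M|z|$ for all $z\in\mathbb{D}$. Fixing $z$ and choosing $\lambda$ so that $\lambda G(z)$ has the same argument as $\omega(z)$ makes the left-hand side equal to $|\omega(z)|+|G(z)|$, which yields the refinement.

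Next I would invert the differential operator to pass from $\omega,G$ back to $h,g$. Put $\beta:=(1-\alpha)/\alpha\ge0$. With $p:=h'-1$, the identity defining $\omega$ becomes $(1-\alpha)p(z)+\alpha zp'(z)=\omega(z)$ and $p(0)=0$; multiplying by $z^{\beta-1}/\alpha$ turns the left-hand side into $(z^\beta p(z))'$, and integrating from the origin along $[0,z]$ — the boundary term at $0$ vanishes since $\beta\ge0$ and $p(0)=0$, and $t^{\beta-1}\omega(tz)$ is absolutely integrable near $t=0$ because $\omega$ vanishes at $0$ — yields, after the substitution $\zeta=tz$,
\[
h'(z)-1=\frac1\alpha\int_0^1 t^{\beta-1}\omega(tz)\,dt,\qquad g'(z)=\frac1\alpha\int_0^1 t^{\beta-1}G(tz)\,dt,
\]
the second being the same computation with $q:=g'$, $q(0)=0$. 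Combining these with the refined bound from the previous step and the identity $\int_0^1 t^{\beta}\,dt=1/(\beta+1)=\alpha$ gives
\[
|h'(z)-1|+|g'(z)|\le\frac1\alpha\int_0^1 t^{\beta-1}\big(|\omega(tz)|+|G(tz)|\big)\,dt\le\frac{M|z|}{\alpha}\int_0^1 t^{\beta}\,dt=M|z|.
\]
Then, integrating along the radius, $f(z)-z=z\int_0^1(h'(tz)-1)\,dt+\ol{z\int_0^1 g'(tz)\,dt}$, hence
\[
|f(z)-z|\le|z|\int_0^1\big(|h'(tz)-1|+|g'(tz)|\big)\,dt\le|z|\int_0^1 Mt|z|\,dt=\frac{M|z|^2}{2},
\]
which is (\ref{req1}).

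For sharpness one checks directly that $f_0(z)=z+Mz^2/2$ lies in $\mathcal{D}_{\mathcal{H}}^0(\alpha,M)$ (the operator sends $f_0$ to $Mz$), that equality holds in (\ref{req1}) when $z$ is a positive or a negative real number, and that for arbitrary $z\ne0$ a suitable rotation $\ol{\lambda}f_0(\lambda z)$, $|\lambda|=1$, realizes equality. The main difficulty is conceptual: one must spot the Schwarz-lemma-with-rotation step that upgrades $|\omega|+|G|\le M$ to $|\omega|+|G|\le M|z|$, for without the extra factor $|z|$ the integration in the third step only yields a bound linear in $|z|$ that degenerates as $\alpha\to1$; beyond that one has to be a little careful about the boundary term and the integrability of $t^{\beta-1}\omega(tz)$ at $t=0$, especially in the endpoint case $\alpha=1$ ($\beta=0$). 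Alternatively, using Theorem~\ref{T1} one may apply an analytic growth theorem for $\mathcal{D}(\alpha,M)$ to each $F_\epsilon=h+\epsilon g$ and optimize over $|\epsilon|=1$ to get $|h(z)-z|+|g(z)|\le M|z|^2/2$, hence the same bound for $|f(z)-z|$; but proving that analytic statement requires the same two ingredients.
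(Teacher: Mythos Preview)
Your proof is correct and rests on the same three ingredients as the paper's: the Schwarz lemma applied to the unimodular combinations $\omega+\lambda G$ (the paper writes this as $(1-\alpha)F_\epsilon'+\alpha zF_\epsilon''-(1-\alpha)=M\omega$ with $F_\epsilon=h+\epsilon g$, invoking Theorem~\ref{T1}), the integrating factor $z^{\beta}$ to invert the first-order operator, and a radial integration. The organization differs, however. The paper derives $1-M|z|\le|F_\epsilon'(z)|\le1+M|z|$, then optimizes over $\epsilon$ to get bounds on $|h'|\pm|g'|$, and finally integrates the two inequalities in (\ref{req1}) separately, splitting into the cases $\alpha\neq1$ and $\alpha=1$. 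You instead obtain the single pointwise estimate $|h'(z)-1|+|g'(z)|\le M|z|$, integrate once to $|f(z)-z|\le M|z|^2/2$, and recover both inequalities by the triangle inequality; this avoids the case split and sidesteps the somewhat delicate lower-bound integration the paper performs. You also explicitly flag the $F_\epsilon$ route as an alternative, which is precisely the paper's line of argument.
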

\begin{proof} Let $f=h+\ol{g}\in \mathcal{D}_{\mathcal{H}}^0(\alpha, M)$. In view of \textrm{Theorem \ref{T1}}, we have $F_{\epsilon}=h+\epsilon g\in \mathcal{D}(\alpha, M)$ for each $|\epsilon|=1$. For $z\in\mathbb{D}$, we have
\beas&& \left|(1-\alpha)F_{\epsilon}'(z)+\alpha zF_{\epsilon}''(z)-(1-\alpha)\right|\\[2mm]
&=&\left|\left((1-\alpha)h'(z)+\alpha zh''(z)-(1-\alpha)\right)+\epsilon\left((1-\alpha)g'(z)+\alpha zg''(z)\right)\right|\leq M. \eeas
Thus, according to the subordination principle, there exists an analytic function $\omega : \mathbb{D}\to \mathbb{D}$ with $\omega(0)=0$ such that
\bea&& (1-\alpha)F_{\epsilon}'(z)+\alpha zF_{\epsilon}''(z)-(1-\alpha)=M\omega(z),\nonumber\\[2mm]\text{\it i.e.,}
\label{re5}&&\frac{d}{dz}\left(\alpha z^{1/\alpha -1} F_\epsilon'(z)\right)=M z^{1/\alpha -2}\omega(z)+(1-\alpha)z^{1/\alpha -2}.\eea
In view of the Schwarz lemma, we have $|\omega(z)|\leq|z|$ for $z\in\mathbb{D}$. 
Note that $z^{a}=\exp(a\log(z))$, where $a>0$ and the branch of the logarithm is determined by $\log(1)=0$. This guarantees that the function is both single-valued and analytic within that range. Let us consider two cases.\\
{\bf Case 1.} Let $\alpha\not=1$.
Using $F_{\epsilon}'(0)=1$, from (\ref{re5}), we have 
\bea\label{a1}&& \left|\alpha z^{1/\alpha -1} F_\epsilon'(z)\right|\nonumber\\
&=&\left|(1-\alpha)\int_{0}^{|z|}(te^{i\theta})^{1/\alpha -2}e^{i\theta}dt+M\int_{0}^{|z|}(te^{i\theta})^{1/\alpha -2} \omega (te^{i\theta}) e^{i\theta}dt\right|\\[2mm]
&\leq& (1-\alpha)\int_{0}^{|z|}t^{1/\alpha -2}dt+M\int_{0}^{|z|} t^{1/\alpha -1} dt\nonumber\\[2mm]
&=&\alpha |z|^{1/\alpha -1}+M\alpha |z|^{1/\alpha}.\nonumber\eea
Therefore, we have 
\bea\label{re66}|F_{\epsilon}'(z)|=|h'(z)+\epsilon g'(z)|\leq 1+M|z|.\eea 
Since $\epsilon$ $(|\epsilon|=1)$ is arbitrary, it follows from (\ref{re66}) that $|h'(z)|+|g'(z)|\leq 1+M |z|$.
Let $\Gamma$ be the radial segment from $0$ to $z$. Therefore,
\beas|f(z)|&=&\left|\int_{\Gamma}\left(\frac{\partial f}{\partial \xi}d\xi+\frac{\partial f}{\partial \ol{\xi}}d\ol{\xi}\right)\right|\\
&\leq& \int_{\Gamma}\left(|h'(\xi)|+|g'(\xi)|\right)|d\xi|\leq \int_0^{|z|}\left(1+M t\right)dt=|z|+M\frac{|z|^2}{2}.\eeas
From (\ref{a1}), we have 
\bea\label{ew22}\left|\alpha z^{1/\alpha -1} F_\epsilon'(z)\right|
&=&\left|(1-\alpha)\int_{0}^{|z|}(te^{i\theta})^{1/\alpha -2}e^{i\theta}dt+M\int_{0}^{|z|}(te^{i\theta})^{1/\alpha -2} \omega (te^{i\theta}) e^{i\theta}dt\right|\nonumber\\
&\geq &(1-\alpha)\int_{0}^{|z|}t^{1/\alpha -2}dt+M\int_{0}^{|z|}t^{1/\alpha -2} \text{Re}\left(\omega (te^{i\theta})\right) dt \nonumber\\
&\geq& \alpha |z|^{1/\alpha -1}-M\int_{0}^{|z|}t^{1/\alpha -1} dt=\alpha |z|^{1/\alpha -1}-M\alpha |z|^{1/\alpha}.\eea
From (\ref{ew22}), we obtain
\bea\label{re88} \left|F_{\epsilon}'(z)\right|=\left|h'(z)+\epsilon g'(z)\right|\geq 1-M |z|.\eea
Since $\epsilon$ $(|\epsilon|=1)$ is arbitrary, it follows from (\ref{re88}) that
\bea\label{re99} |h'(z)|-|g'(z)|\geq 1-M|z|.\eea
In view of (\ref{re99}), we obtain
\beas|f(z)|=\left|\int_0^z\left(\frac{\partial f}{\partial \xi}d\xi+\frac{\partial f}{\partial \ol{\xi}}d\ol{\xi}\right)\right|&\geq& \int_0^{|z|}\left(|h'(\xi)|-|g'(\xi)|\right)|d\xi|\\
&\geq&|z|-M\int_0^{|z|} tdt=|z|-M\frac{|z|^2}{2}.\eeas
{\bf Case 2.} Let $\alpha=1$. From (\ref{re5}), we have 
\beas \frac{d}{dz}\left(F_\epsilon'(z)\right)=\frac{M\omega(z)}{z}.\eeas 
Using $F_{\epsilon}'(0)=1$, we have
\beas\left|F_{\epsilon}'(z)\right|=\left|1+M\int_{0}^{|z|} \frac{\omega(te^{i\theta})}{te^{i\theta}} e^{i\theta}dt\right|\leq 1+M|z|.\eeas
Similarly, we have 
\beas\left|F_{\epsilon}'(z)\right|=\left|1+M\int_{0}^{|z|} \frac{\omega(te^{i\theta})}{te^{i\theta}} e^{i\theta}dt\right|\geq 1+M\int_{0}^{|z|} \frac{\text{Re}(\omega(te^{i\theta}))}{t}dt\geq 1-M|z|.\eeas
Using the same argument as in Case 1, we arrive at the following conclusion
\beas |z|-M\frac{|z|^2}{2}\leq |f(z)|\leq |z|+M\frac{|z|^2}{2}.\eeas
Equality holds in (\ref{req1}) when the function $f$ given by $f(z)=z+Mz^2/2\in \mathcal{D}_{\mathcal{H}}^0(\alpha, M)$ or its  suitable rotations. This completes the proof.
\end{proof}
\begin{rem} Setting $\alpha=1$ in \textrm{Theorem \ref{T3}} gives \textrm{Theorem 2.3} of \cite{GA2017}.\end{rem}
\noindent In the following result, we establish the upper bound of the Jacobian for functions in the class $\mathcal{D}_{\mathcal{H}}^0(\alpha, M)$. 
\begin{theo} If $f\in\mathcal{D}_{\mathcal{H}}^0(\alpha, M)$ for $M>0$ and $\alpha\in(0, 1]$, then $J_f(z)\leq (1+M|z|)^2$, with equality for the function $f(z)=z+Mz^2/2$.\end{theo}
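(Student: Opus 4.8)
The plan is to read off the bound from the auxiliary inequality $|h'(z)|+|g'(z)|\le 1+M|z|$, which is exactly the estimate produced in the course of proving Theorem \ref{T3} (not just its stated conclusion). So the first step would be to recall that derivation: given $f=h+\ol g\in\mathcal{D}_{\mathcal{H}}^0(\alpha,M)$, Theorem \ref{T1} yields $F_\epsilon=h+\epsilon g\in\mathcal{D}(\alpha,M)$ for every $\epsilon$ with $|\epsilon|=1$, and the subordination together with the Schwarz lemma (the computation carried out in Case 1 and Case 2 of the proof of Theorem \ref{T3}) gives
\[
|F_\epsilon'(z)|=|h'(z)+\epsilon g'(z)|\le 1+M|z|.
\]
Since $\epsilon$ with $|\epsilon|=1$ is arbitrary, choosing it so that $\epsilon g'(z)$ is a nonnegative multiple of $h'(z)$ in modulus (equivalently, taking $\epsilon = \overline{h'(z)}\,g'(z)/(|h'(z)||g'(z)|)$ when both are nonzero, and trivially otherwise) produces $|h'(z)|+|g'(z)|\le 1+M|z|$.

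With this in hand the estimate is immediate. Using $J_f(z)=|h'(z)|^2-|g'(z)|^2$, that $|g'(z)|^2\ge 0$, and $|h'(z)|\le |h'(z)|+|g'(z)|$, we get
\[
J_f(z)=|h'(z)|^2-|g'(z)|^2\le |h'(z)|^2\le\bigl(|h'(z)|+|g'(z)|\bigr)^2\le (1+M|z|)^2 .
\]

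For sharpness I would check the function $f(z)=z+Mz^2/2$, so that $h(z)=z+Mz^2/2$ and $g\equiv 0$. Then $h'(z)=1+Mz$ and $h''(z)=M$, whence $(1-\alpha)h'(z)+\alpha z h''(z)-(1-\alpha)=Mz$, which has modulus $\le M$ on $\mathbb{D}$; since $g\equiv 0$ this shows $f\in\mathcal{D}_{\mathcal{H}}^0(\alpha,M)$. Moreover $J_f(z)=|h'(z)|^2=|1+Mz|^2$, and taking $z$ a positive real number gives $J_f(z)=(1+M|z|)^2$, so equality is attained.

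I do not expect a real obstacle here: the statement is a one-line consequence of the growth-theorem machinery. The only point deserving care is confirming that the inequality $|h'(z)|+|g'(z)|\le 1+M|z|$ is genuinely available — but it is precisely what is established (before the final integration along a radial segment) in the proof of Theorem \ref{T3}, so no additional argument is needed.
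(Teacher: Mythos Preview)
Your argument is correct and follows essentially the same route as the paper: both invoke the machinery from the proof of Theorem~\ref{T3} to obtain $|h'(z)|\le 1+M|z|$ and then drop the $-|g'(z)|^2$ term in $J_f$. The only cosmetic differences are that the paper observes directly that $h\in\mathcal{D}(\alpha,M)$ (rather than passing through $F_\epsilon$ and the stronger bound $|h'|+|g'|\le 1+M|z|$), and that your explicit choice of $\epsilon$ should be $\epsilon=h'(z)\,\overline{g'(z)}/(|h'(z)||g'(z)|)$ rather than its conjugate.
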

\begin{proof} As $f=h+\ol{g}\in \mathcal{D}_{\mathcal{H}}^0(\alpha, M)$, thus, we have
\beas\left|(1-\alpha)h'(z)+\alpha zh''(z)-(1-\alpha)\right|\leq M -\left|(1-\alpha)g'(z)+\alpha zg''(z)\right|\leq M~\text{for}~z\in\mathbb{D},\eeas
which shows that $h(z)\in\mathcal{D}(\alpha, M)$. In view of the subordination principle, there exists an analytic function $\omega:\mathbb{D}\to\mathbb{D}$ with $\omega(0)=0$ such that
\beas&& (1-\alpha)h'(z)+\alpha zh''(z)-(1-\alpha)=M\omega(z),\\[2mm]\text{\it i.e.,}
&&\frac{d}{dz}\left(\alpha z^{1/\alpha -1} h'(z)\right)=M z^{1/\alpha -2}\omega(z)+(1-\alpha)z^{1/\alpha -2}.\eeas
Since $h'(0)=1$ and $\omega$ is a Schwarz function, thus, we have $|\omega(z)|\leq|z|$ for $z\in\mathbb{D}$. Utilizing the same argument as in Case 1 and Case 2 of \textrm{Theorem \ref{T3}}, we have $\left| h'(z)\right|\leq 1+M |z|$. Therefore, 
\bea \label{et2}J_f(z)=\left|h'(z)\right|^2-\left|g'(z)\right|^2\leq \left|h'(z)\right|^2\leq \left(1+M|z|\right)^2.\eea
The equality in (\ref{et2}) holds for the function $f=z+Mz^2/2\in\mathcal{D}_{\mathcal{H}}^0(\alpha, M)$. This completes the proof.\end{proof}
\noindent The following theorem gives a sufficient condition for a complex-valued function belonging to $\mathcal{D}_{\mathcal{H}}^0(\alpha, M)$.
\begin{theo}\label{TT1} Let $f=h+\ol{g}\in\mathcal{H}$ with $g'(0)=0$ be given by (\ref{re1}). If 
\bea\label{a2} \sum_{n=2}^\infty \left(n+(n^2-2n)\alpha\right)\left(|a_n|+|b_n|\right)\leq M,\eea
then $f\in\mathcal{D}_{\mathcal{H}}^0(\alpha, M)$.
\end{theo}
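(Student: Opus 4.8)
The plan is to reduce the defining inequality of the class $\mathcal{D}_{\mathcal{H}}^0(\alpha, M)$ to a single termwise estimate obtained from the triangle inequality. First I would expand the two differential expressions occurring in the definition as power series. Writing $h(z) = z + \sum_{n=2}^\infty a_n z^n$ and $g(z) = \sum_{n=2}^\infty b_n z^n$ as in (\ref{re1}), a direct computation together with the identity $(1-\alpha)n + \alpha n(n-1) = n + (n^2-2n)\alpha$ yields
\[
(1-\alpha)h'(z) + \alpha z h''(z) - (1-\alpha) = \sum_{n=2}^\infty \left(n + (n^2-2n)\alpha\right) a_n z^{n-1}
\]
and, in exactly the same way,
\[
(1-\alpha)g'(z) + \alpha z g''(z) = \sum_{n=2}^\infty \left(n + (n^2-2n)\alpha\right) b_n z^{n-1}.
\]

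Next, for $z \in \mathbb{D}$ I would apply the triangle inequality to both series and use $|z|^{n-1} < 1$ together with the fact that each coefficient $n + (n^2-2n)\alpha$ is nonnegative for $n \geq 2$ and $\alpha \in (0,1]$. This gives
\[
\left|(1-\alpha)h'(z) + \alpha z h''(z) - (1-\alpha)\right| + \left|(1-\alpha)g'(z) + \alpha z g''(z)\right| \leq \sum_{n=2}^\infty \left(n + (n^2-2n)\alpha\right)\left(|a_n| + |b_n|\right)|z|^{n-1},
\]
and the right-hand side is bounded above by $\sum_{n=2}^\infty \left(n + (n^2-2n)\alpha\right)\left(|a_n| + |b_n|\right) \leq M$ by hypothesis (\ref{a2}). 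Rearranging, I obtain $\left|(1-\alpha)h'(z) + \alpha z h''(z) - (1-\alpha)\right| \leq M - \left|(1-\alpha)g'(z) + \alpha z g''(z)\right|$ for all $z \in \mathbb{D}$; since also $g'(0) = 0$ and $f = h + \overline{g} \in \mathcal{H}$ by assumption, this is precisely the condition defining $\mathcal{D}_{\mathcal{H}}^0(\alpha, M)$, so $f$ belongs to that class.

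There is essentially no obstacle in this argument: the proof is a routine triangle-inequality estimate. The only two points requiring a moment of care are collecting the correct coefficient $n + (n^2-2n)\alpha$ when differentiating the series for $h$ and $g$, and observing that these coefficients are nonnegative for $n \geq 2$ (which holds because $n^2 - 2n = n(n-2) \geq 0$ there), so that the termwise estimate is legitimate; the strict inequality $|z|^{n-1} < 1$ on $\mathbb{D}$ then does the rest.
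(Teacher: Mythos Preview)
Your proof is correct and follows essentially the same approach as the paper's own argument: both expand the differential expressions as power series with coefficients $n+(n^2-2n)\alpha$ and reduce the defining inequality of $\mathcal{D}_{\mathcal{H}}^0(\alpha,M)$ to the termwise estimate (\ref{a2}) via the triangle inequality and $|z|<1$. The only cosmetic difference is that the paper bounds the $h$-expression first, uses (\ref{a2}) to introduce $M-\sum(n+(n^2-2n)\alpha)|b_n|$, and then applies the reverse triangle inequality to recover the $g$-expression, whereas you bound both expressions simultaneously and rearrange at the end; the content is the same.
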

\begin{proof}Let $f=h+\ol{g}\in\mathcal{H}$ with $g'(0)=0$ be given by (\ref{re1}). Therefore, we have $h(z)=z+\sum_{n=2}^\infty a_nz^n$ and $g(z)=\sum_{n=2}^\infty b_nz^n$. Using (\ref{a2}), we have 
\beas \left|(1-\alpha)h'(z)+z\alpha h''(z)-(1-\alpha)\right|&=&\left|\sum_{n=2}^\infty \left(n+(n^2-2n)\alpha\right)a_nz^{n-1}\right|\\[1mm]
&\leq& \sum_{n=2}^\infty \left(n+(n^2-2n)\alpha\right)|a_n||z|^{n-1}\\[1mm]
&\leq &\sum_{n=2}^\infty \left(n+(n^2-2n)\alpha\right)|a_n|\\[1mm]
&\leq& M-\sum_{n=2}^\infty\left(n+(n^2-2n)\alpha\right)|b_n|\\[1mm]
&\leq& M-\left|\sum\limits_{n=2}^\infty \left(n+(n^2-2n)\alpha\right) b_n z^{n-1}\right|\\[1mm]
&=&M-\left|(1-\alpha)g'(z)+\alpha zg''(z)\right|,\eeas
which shows that $f\in\mathcal{D}_{\mathcal{H}}^0(\alpha, M)$. This completes the proof.
\end{proof}
\noindent Now, we recall the following known result.
\begin{lem}\cite{20P}\label{lem10} Let $f=h+\ol{g}$ be given by (\ref{e1}). If $\sum_{n=2}^{\infty} n\left(|a_n|+|b_n|\right)\leq 1$, then $f$ is starlike in $\mathbb{D}$.
\end{lem}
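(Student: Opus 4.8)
This is a classical sufficient coefficient condition for membership in $\mathcal{S}_{\mathcal{H}}^{*}$, and the plan is to reproduce its standard proof. Write $z=re^{i\theta}$ with $0<r<1$, and let $h(z)=z+\sum_{n=2}^{\infty}a_nz^n$ and $g(z)=\sum_{n=2}^{\infty}b_nz^n$. The starting point is the differential criterion for harmonic starlikeness: a sense-preserving $f=h+\ol g$ with $f(0)=0$ and $f(z)\neq 0$ on $0<|z|<1$ is starlike in $\mathbb{D}$ provided
\[
\frac{\pa}{\pa\theta}\bigl(\arg f(re^{i\theta})\bigr)=\text{Re}\left(\frac{zh'(z)-\ol{zg'(z)}}{h(z)+\ol{g(z)}}\right)>0
\]
for all such $z$. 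The problem thus splits into: (i) derive this pointwise inequality from the coefficient hypothesis, and (ii) verify the standing assumptions (sense-preservation and that $f$ does not vanish off the origin).

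For (i) I would use $\text{Re}(w)>0\iff|w-1|<|w+1|$. Clearing the positive denominator $|h(z)+\ol{g(z)}|$, it suffices to prove
\[
\bigl|(zh'(z)-h(z))-\ol{(zg'(z)+g(z))}\bigr|<\bigl|(zh'(z)+h(z))-\ol{(zg'(z)-g(z))}\bigr|.
\]
Since $zh'(z)-h(z)=\sum_{n\ge 2}(n-1)a_nz^n$, $zh'(z)+h(z)=2z+\sum_{n\ge 2}(n+1)a_nz^n$, $zg'(z)+g(z)=\sum_{n\ge 2}(n+1)b_nz^n$ and $zg'(z)-g(z)=\sum_{n\ge 2}(n-1)b_nz^n$, bounding the left side above and the right side below by the triangle inequality shows that the inequality follows once
\[
\sum_{n=2}^{\infty}2n\bigl(|a_n|+|b_n|\bigr)|z|^{n}\le 2|z|,\qquad\text{i.e.}\qquad\sum_{n=2}^{\infty}n\bigl(|a_n|+|b_n|\bigr)|z|^{n-1}\le 1 .
\]
As $0<|z|<1$ forces $|z|^{n-1}<1$ for $n\ge 2$, we have $\sum_{n\ge 2}n(|a_n|+|b_n|)|z|^{n-1}<\sum_{n\ge 2}n(|a_n|+|b_n|)\le 1$ by hypothesis (unless every coefficient vanishes, in which case $f(z)=z$ is trivially starlike), so the displayed real part is strictly positive.

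For (ii): since $|h'(z)|\ge 1-\sum_{n\ge 2}n|a_n||z|^{n-1}$ and $|g'(z)|\le\sum_{n\ge 2}n|b_n||z|^{n-1}$, one gets $|h'(z)|-|g'(z)|\ge 1-\sum_{n\ge 2}n(|a_n|+|b_n|)|z|^{n-1}>0$, so $h'$ is zero-free, the dilatation $g'/h'$ is analytic with modulus $<1$, and $f$ is sense-preserving; the same type of estimate gives $|f(z)|\ge|z|-\sum_{n\ge 2}(|a_n|+|b_n|)|z|^{n}>0$ for $0<|z|<1$. With (i) and (ii) in hand, the differential criterion applies on each circle $|z|=r<1$, and letting $r\to 1^{-}$ yields $f\in\mathcal{S}_{\mathcal{H}}^{*}$. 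I expect the only genuinely delicate point to be precisely this last local-to-global passage — turning $\pa_\theta\arg f>0$ into univalence and starlikeness of the image, for which sense-preservation is essential — which is not a computation but a degree/argument-principle argument; I would invoke the form of it contained in \cite{20P}. Everything else is the routine series estimate sketched above.
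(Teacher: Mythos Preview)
The paper does not prove this lemma at all; it is quoted verbatim from Avci and Z\l otkiewicz \cite{20P} and used as a black box in the next theorem. Your argument is precisely the standard proof one finds in that reference (and in Jahangiri's closely related work): the reduction of $\text{Re}\bigl((zh'-\ol{zg'})/(h+\ol g)\bigr)>0$ to the coefficient inequality via $|w-1|<|w+1|$ and the triangle estimates is carried out correctly, as are the auxiliary checks of sense-preservation and non-vanishing.

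One minor comment: citing \cite{20P} for the final local-to-global step (from $\pa_\theta\arg f>0$ on each circle to univalence and starlikeness of the image) is slightly circular, since that is exactly the paper from which the lemma is lifted. If you want the argument to stand on its own, point instead to the harmonic argument principle (e.g.\ Duren \cite[\S3.3]{24P} or Clunie--Sheil-Small \cite{4P}): sense-preservation plus the fact that the image of $|z|=r$ is a curve whose argument is strictly increasing forces that curve to be simple and to wind once about $0$, and then degree theory gives injectivity of $f$ on $|z|\le r$ for every $r<1$.
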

\begin{theo} Let $M>0$, $\alpha\in(0,1]$ and $f=h+\ol{g}\in\mathcal{D}_{\mathcal{H}}^0(\alpha, M)$ be given by (\ref{re1}). Then $f$ is starlike in $|z|\leq r_1$,
where $r_1\in(0,1)$ is the smallest root of the equation 
\beas 2Mr \;{}_2F_1\left(1,\frac{1}{\alpha};1+\frac{1}{\alpha}; r\right)-1=0.\eeas\end{theo}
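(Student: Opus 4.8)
The plan is to obtain starlikeness on a subdisk from the coefficient sufficient condition of Lemma \ref{lem10} applied to a dilation of $f$. First I would observe that for $0<r<1$ the map $f=h+\ol g$ is starlike in $|z|<r$ if and only if the dilation $f_r(z):=r^{-1}f(rz)=r^{-1}h(rz)+\ol{r^{-1}g(rz)}$ is starlike in $\mathbb{D}$, and that $f_r$ has the normalization of (\ref{re1}) with analytic and co-analytic coefficients $a_nr^{n-1}$ and $b_nr^{n-1}$ and with $g'(0)=0$. Hence, by Lemma \ref{lem10}, it suffices to find the largest $r$ for which
\beas \sum_{n=2}^\infty n\bigl(|a_n|+|b_n|\bigr)r^{n-1}\leq 1. \eeas

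Next I would insert the sharp bounds $|a_n|,|b_n|\leq M/\bigl(n+(n^2-2n)\alpha\bigr)$ from Theorem \ref{T2} together with the factorisation $n+(n^2-2n)\alpha=n\bigl(1+(n-2)\alpha\bigr)$, which yields
\beas \sum_{n=2}^\infty n\bigl(|a_n|+|b_n|\bigr)r^{n-1}\leq 2M\sum_{n=2}^\infty\frac{r^{n-1}}{1+(n-2)\alpha}=2Mr\sum_{k=0}^\infty\frac{r^{k}}{1+k\alpha}. \eeas
The key step is to recognise the last series as a Gauss hypergeometric function: since $(1)_k=k!$ and $(1/\alpha)_k/(1+1/\alpha)_k=(1/\alpha)/(k+1/\alpha)=1/(1+k\alpha)$, one gets $\sum_{k\geq0}r^k/(1+k\alpha)={}_2F_1\bigl(1,1/\alpha;1+1/\alpha;r\bigr)$. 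Consequently $f_r$ is starlike in $\mathbb{D}$ whenever $\psi(r):=2Mr\,{}_2F_1\bigl(1,1/\alpha;1+1/\alpha;r\bigr)\leq 1$.

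Finally I would verify that $\psi(r)=1$ has a root $r_1$ in $(0,1)$, necessarily unique and hence the smallest. Since $\psi(r)=2M\sum_{k\geq0}r^{k+1}/(1+k\alpha)$ has non-negative coefficients, $\psi$ is continuous and strictly increasing on $[0,1)$ with $\psi(0)=0<1$; moreover $\psi(r)\to\infty$ as $r\to1^-$ because $\sum_{k\geq0}1/(1+k\alpha)$ diverges (equivalently ${}_2F_1(1,1/\alpha;1+1/\alpha;1)=\infty$, as $c-a-b=0$ in Gauss's evaluation). By the intermediate value theorem there is a unique $r_1\in(0,1)$ with $\psi(r_1)=1$, and $\psi(r)\leq1$ for all $r\leq r_1$; applying the first step for each such $r$ shows that $f$ is starlike in $|z|\leq r_1$. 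The only non-routine ingredients are the hypergeometric identification and the observation that the Gauss series blows up at $r=1$ (which is exactly what makes the claimed radius well defined); everything else is a direct combination of Theorem \ref{T2} and Lemma \ref{lem10}.
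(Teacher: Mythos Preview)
Your proof is correct and follows essentially the same approach as the paper: dilate $f$, apply the coefficient bounds of Theorem~\ref{T2}, invoke Lemma~\ref{lem10}, and identify the resulting series as ${}_2F_1(1,1/\alpha;1+1/\alpha;r)$. The only difference is cosmetic: the paper passes through the Euler integral representation of ${}_2F_1$ to make the identification, whereas you use the Pochhammer ratio $(1/\alpha)_k/(1+1/\alpha)_k=1/(1+k\alpha)$ directly; your additional remark that $\psi$ is strictly increasing (hence the root is unique) is a small bonus the paper does not record.
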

\begin{proof} Let $0<r<1$ and $f_r(z)=f(rz)/r=z+\sum_{n=2}^{\infty} a_nr^{n-1}z^n+\ol{\sum_{n=2}^{\infty} b_nr^{n-1}z^n}$ for 
$z\in\mathbb{D}$.
For convenience, let 
\beas S=\sum_{n=2}^{\infty} n\left(|a_n|+|b_n|\right)r^{n-1}.\eeas
In view of \textrm{Theorem \ref{T2}}, we have 
\beas S\leq 2M\sum_{n=2}^{\infty}\frac{r^{n-1}}{1+(n-2)\alpha}&=&\frac{2M}{\alpha} r^{1-1/\alpha}\sum_{n=2}^{\infty}\int_{\xi=0}^r  \xi^{1/\alpha+n-3}d\xi\\[1mm]
&=&\frac{2M}{\alpha} r^{1-1/\alpha}\int_{\xi=0}^r  \frac{\xi^{1/\alpha-1}}{1-\xi}d\xi\\[1mm]
&=&\frac{2M}{\alpha} r\int_{t=0}^1 \frac{t^{1/\alpha-1}}{1-r t}dt.\eeas
We know that an integral giving the hypergeometric function (see \cite{B1964}) is
\beas {}_2F_1\left(a,b;c; z\right)=\frac{\Gamma(c)}{\Gamma(b)\Gamma(b-c)}\int_{0}^1\frac{t^{b-1}(1-t)^{c-b-1}}{(1-t z)^a}dt.\eeas
Therefore, we have 
\beas S\leq 2Mr \;{}_2F_1\left(1,\frac{1}{\alpha};1+\frac{1}{\alpha}; r\right)\leq 1\quad\text{for}\quad r\leq r_1,\eeas
where $r_1\in(0,1)$ is the smallest root of the equation 
\beas H(r):=2Mr \;{}_2F_1\left(1,\frac{1}{\alpha};1+\frac{1}{\alpha}; r\right)-1=0.\eeas
Note that ${}_2F_1\left(1,1/\alpha;1+1/\alpha; 0\right)=1$, ${}_2F_1\left(1,1/\alpha;1+1/\alpha; 1\right)=+\infty$ and the function $H(r)$ 
is continuous in $[0,1]$ with $\lim_{r\to 0^+} H(r)=-1$ and $\lim_{r\to1^-}H(r)=+\infty$. The intermediate value theorem guarantees the existence of a root for the equation $H(r)=0$ within the interval $(0,1)$.
This completes the proof.\end{proof}
\section{ introduction and preliminaries of Hankel determinants}
\noindent Let $\mathcal{H}_1$ denote the class of analytic functions in the unit disk $\mathbb{D}:=\{z\in\mathbb{C}: |z|<1\}$. Let $\mathcal{A}$ denote the class of functions $f\in\mathcal{H}_1$ such
that $f(0)=0$ and $f'(0)=1$. Let $\mathcal{S}$ denote the subclass of $\mathcal{A}$ such that each functions are univalent in $\mathbb{D}$. If $f\in\mathcal{S}$, then it has the following form:
\be\label{eq1} f(z)=z+\sum_{n=2}^\infty a_n z^n\; \;\text{for}\,z\in\mathbb{D}.\ee
The logarithmic coefficients $\gamma_n$ associated with each $f\in\mathcal{S}$ are defined by
\be\label{eq2} F_f(z):=\log\frac{f(z)}{z}=2\sum_{n=1}^\infty \gamma_n z^n\;\text{for}\;z\in\mathbb{D}.\ee
The logarithmic coefficients $\gamma_n$ are essential in the theory of univalent functions, see \cite[Chapter 5]{8} for more information.
Differentiating (\ref{eq2}) and using (\ref{eq1}), we obtain
\beas \gamma_1=\frac{1}{2}a_2,\; \gamma_2=\frac{1}{2}\left(a_3-\frac{1}{2}a_2^2\right),\;\gamma_3=\frac{1}{2}\left(a_4-a_2a_3+\frac{1}{3}a_2^3\right).\eeas
If $f\in\mathcal{S}$, then by the Bieberbach’s theorem, we have $|a_2|\leq 2$ and hence $|\gamma_1|\leq 1$. Using the Fekete-Szeg\"o
inequality \cite[Theorem 3.8]{8} for functions in $\mathcal{S}$, we obtain $|\gamma_2|=(1/2)\left|a_3-(1/2)a_2^2\right|\leq (1/2)+e^{-2}=0.635...$. For $n\geq3$, the problem 
seems much harder and no significant bound for $|\gamma_n|$ when $f\in\mathcal{S}$. Let $f\in\mathcal{A}$ and $n,q\in\mathbb{N}$.
The Hankel determinants are significant in various areas of study, such as the analysis of singularities \cite[Chapter $X$]{450} and power series with integral coefficients \cite{460}.
For more information on the Hankel determinants, we refer to \cite{440,550}. Let $f\in\mathcal{S}$ and $g=f^{-1}$ be defined in a neighborhood of the origin with the Taylor series expansion
\bea\label{g1} g(\omega)=f^{-1}(\omega)=\omega +\sum_{n=2}^{\infty} A_n\omega^n, \eea 
where we choose $|\omega|<1/4$, as we know from Koebe One-Quarter Theorem (see \cite{8}). L\"owner \cite{34} obtained the sharp bound
$|A_n|\leq 1\cdot3\cdot5\cdots(2n-1)\cdot2^n/(n+1)!$ for $n\geq2$ by using variational method and the equality holds when $f^{-1}$ is the inverse of Koebe function. Equating the coefficients in $f\left(f^{-1}(\omega)\right)=\omega$ by means of (\ref{eq1}) and (\ref{g1}), we derive that
\bea\label{g2} A_2=-a_2,A_3=2a_2^2-a_3,A_3=-5a_2^3+5a_2a_3-a_4,\cdots.\eea
The notion of logarithmic coefficients of the inverse univalent functions was proposed by Ponnusamy {\it et al.} \cite{35}. The logarithmic inverse coefficients $\Gamma_n$ 
($n\in\mathbb{N}$) of $f^{-1}$ are defined by the equation
\be\label{g3} F_{f^{-1}}(\omega):=\log\frac{f^{-1}(\omega)}{\omega}=2\sum_{n=1}^\infty \Gamma_n \omega^n\;\;\text{for}\;\;|\omega|<\frac{1}{4}.\ee
Differentiating (\ref{g3}) together with (\ref{g1}) and (\ref{g2}), we obtain 
\bea \Gamma_1=-\frac{1}{2}a_2,\Gamma_2=-\frac{1}{2}a_3+\frac{3}{4}a_2^2, \Gamma_3=-\frac{1}{2}a_4+2a_2a_3-\frac{5}{3}a_2^3.\eea
In 2018, Ponnusamy {\it et al.} \cite{35} proved that if $f\in\mathcal{S}$, then $|\Gamma_n|\leq(1/(2n))\binom{2n}{n}$, $n\in\mathbb{N}$
and the equality holds only for Koebe function or its rotations.
In 2022, Kowalczyk and Lecko \cite{13} proposed the study of the Hankel determinant whose entries are logarithmic coefficients of $f\in\mathcal{S}$, which is given by
\beas H_{q,n}\left(F_f/2\right):=\left|\begin{array}{ccccc}
\gamma_n&\gamma_{n+1}&\cdots&\gamma_{n+q-1}\\
\gamma_{n+1}&\gamma_{n+2}&\cdots&\gamma_{n+q}\\
\vdots&\vdots&\ddots&\vdots\\
\gamma_{n+q-1}&\gamma_{n+q}&\cdots&\gamma_{n+2(q-1)}\\
\end{array}\right|.\eeas
Also, the authors \cite{13} obtained the sharp bound of second Hankel determinant of $H_{2,1}\left(F_f/2\right)$ for starlike and convex functions.
Numerous authors have extensively investigated the sharp bound of Hankel determinants of logarithmic coefficients, for more details (see \cite{3,RB1,13,14,25,28}).\\[2mm] 
\indent Motivated by the results of \cite{3,RB1,13,14,25,28}, in this paper, we investigate the second Hankel determinant of logarithmic inverse coefficients for functions in the class $\mathcal{P}(M)$ define in (\ref{k1}). Suppose that $f\in\mathcal{S}$ given by (\ref{eq1}). Then the second Hankel determinant of $F_{f^{-1}}/2$ is given by
\bea\label{eq3} H_{2,1}\left(F_{f^{-1}}/2\right)=\Gamma_1\Gamma_3-\Gamma_2^2&=&\frac{1}{4}\left(A_2A_4-A_3^2+\frac{1}{4}A_2^4\right)\nonumber\\&=&\frac{1}{48}\left(13a_2^4-12a_2^2a_3-12a_3^2+12a_2a_4\right).\eea
Note that $H_{2,1}\left(F_{f^{-1}}/2\right)$ is invariant under rotation, since for $g_{\theta}(z)=e^{-i\theta}f\left(e^{i\theta}z\right)$, $\theta\in\mathbb{R}$ and $f\in\mathcal{S}$, we have
\beas H_{2,1}\left(F_{g_{\theta}^{-1}}/2\right)= \frac{e^{4i\theta}}{48}\left(13a_2^4-12a_2^2a_3-12a_3^2+12a_2a_4\right)=e^{4i\theta}H_{2,1}\left(F_{f^{-1}}/2\right).\eeas
Let $\mathcal{P}$ denote the class of all analytic functions $p$ with $p(0)=1$ and $\text{Re}(p(z))>0$ in $\mathbb{D}$, and is of the form
\bea\label{eq4} p(z)=1+c_1z+c_2z^2+\cdots.\eea  
A member of $\mathcal{P}$ is called a Carath\'eodory function. It is well-known that (see \cite[p. 41]{8}) $|c_n|\leq 2$ ($n\in\mathbb{N}$) for a function $p\in\mathcal{P}$. 
The main aim of this paper is to find the sharp upper bound of $H_{2,1}\left(F_{f^{-1}}/2\right)$ for functions $f$ in the class $\mathcal{P}(M)$.
\section{key lemmas}
Our computing is based on the well-known formula on coefficient $c_2$ (e.g., \cite[p. 166]{R1}) and on the formula $c_3$ due to Libera and
Zlotkiewicz \cite{17} and \cite{18}, both with $c_1 \geq 0$. The version below comes from \cite{R2}, where the extremal functions have been determined.
\begin{lem}\label{lem1} If $p\in\mathcal{P}$ is of the form (\ref{eq4}) with $c_1\geq 0$, then
\bea&&\label{l1} c_1=2p_1,\;c_2=2p_1^2+2(1-p_1^2)p_2,\\
&&\label{l2}c_3=2p_1^3+4(1-p_1^2)p_1p_2-2(1-p_1^2)p_1p_2^2+2(1-p_1^2)(1-|p_2|^2)p_3\eea
for some $p_1\in[0,1]$ and $p_2,p_3\in\ol{\mathbb{D}}:=\{z\in\mathbb{C} : |z|\leq 1\}$.\\
For $p_1\in\mathbb{T}:=\{z\in\mathbb{C} : |z|=1\}$, there is a unique function $p\in\mathcal{P}$ with $c_1$ as in (\ref{l1}), namely 
\beas p(z)=\frac{1+p_1z}{1-p_1z}, \;z\in\mathbb{D}.\eeas
For $p_1\in\mathbb{D}$ and $p_2\in\mathbb{T}$, there is a unique function $p\in\mathcal{P}$ with $c_1, c_2$ as in (\ref{l1}), namely 
\bea\label{er1} p(z)=\frac{1+(p_1+\ol{p_1}p_2)z+p_2z^2}{1-(p_1-\ol{p_1}p_2)z-p_2z^2}, \;z\in\mathbb{D}.\eea
For $p_1,p_2\in\mathbb{D}$ and $p_3\in\mathbb{T}$, there is a unique function $p\in\mathcal{P}$ with $c_1, c_2,c_3$ as in (\ref{l1}) and (\ref{l2}), namely 
\beas p(z)=\frac{1+(\ol{p_2}p_3+\ol{p_1}p_2+p_1)z+(\ol{p_1}p_3+p_1\ol{p_2}p_3+p_2)z^2+p_3z^3}{1+(\ol{p_2}p_3+\ol{p_1}p_2-p_1)z+(\ol{p_1}p_3-p_1\ol{p_2}p_3-p_2)z^2-p_3z^3}, \;z\in\mathbb{D}.\eeas
\end{lem}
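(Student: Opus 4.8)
The plan is to derive everything from the Schur parametrisation of Schwarz functions. Set $\omega:=(p-1)/(p+1)$; then $p\in\mathcal P$ is equivalent to $\omega$ being analytic on $\mathbb D$ with $\omega(0)=0$ and $\omega(\mathbb D)\subseteq\ol{\mathbb D}$ (so $\omega\equiv0$ when $p\equiv1$, and $|\omega(z)|<1$ otherwise). Expanding $p=(1+\omega)/(1-\omega)=1+2\omega+2\omega^2+2\omega^3+\cdots$ and writing $\omega(z)=\sum_{n\ge1}d_nz^n$ gives
\[ c_1=2d_1,\qquad c_2=2d_2+2d_1^2,\qquad c_3=2d_3+4d_1d_2+2d_1^3. \]
Since $|c_1|\le2$ and $c_1\ge0$, we have $d_1\in[0,1]$; put $p_1:=d_1$, which is then real, so $\ol{p_1}=p_1$.

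Next I would run the Schur algorithm twice on $\omega$. By Schwarz's lemma $\omega_1(z):=\omega(z)/z$ is analytic with $\omega_1(\mathbb D)\subseteq\ol{\mathbb D}$ and $\omega_1(0)=d_1=p_1$. If $|p_1|=1$ the maximum principle forces $\omega_1\equiv p_1$, i.e.\ $\omega(z)=p_1z$; otherwise the one-step Schur transform $\omega_2(z):=\bigl(\omega_1(z)-p_1\bigr)/\bigl(z(1-\ol{p_1}\omega_1(z))\bigr)$ is again analytic with $\omega_2(\mathbb D)\subseteq\ol{\mathbb D}$, and we set $p_2:=\omega_2(0)\in\ol{\mathbb D}$; iterating once more produces $\omega_3$ with $p_3:=\omega_3(0)\in\ol{\mathbb D}$ (the process terminating early if some $|p_k|=1$). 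Inverting the two M\"obius steps, $\omega_1=(p_1+z\omega_2)/(1+\ol{p_1}z\omega_2)$ and $\omega_2=(p_2+z\omega_3)/(1+\ol{p_2}z\omega_3)$, and expanding about $z=0$ yields
\[ d_1=p_1,\qquad d_2=(1-|p_1|^2)p_2,\qquad d_3=(1-|p_1|^2)\bigl((1-|p_2|^2)p_3-\ol{p_1}p_2^2\bigr). \]
Using $\ol{p_1}=p_1$ and substituting into the three formulas for $c_1,c_2,c_3$ above reproduces exactly the asserted identities (\ref{l1})--(\ref{l2}) with $p_1\in[0,1]$ and $p_2,p_3\in\ol{\mathbb D}$.

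For the three uniqueness statements I would trace the algorithm in the terminating cases (now without assuming $c_1\ge0$, so $p_1$ may be any point of $\mathbb D$). If $|p_1|=1$ then $\omega(z)=p_1z$, whence $p(z)=(1+p_1z)/(1-p_1z)$. If $|p_1|<1$ and $|p_2|=1$, then $\omega_2\equiv p_2$, so $\omega(z)=z(p_1+p_2z)/(1+\ol{p_1}p_2z)$; simplifying $p=(1+\omega)/(1-\omega)$ gives precisely the degree-two rational function in (\ref{er1}). If $|p_1|,|p_2|<1$ and $|p_3|=1$, then $\omega_3\equiv p_3$; back-substituting through both M\"obius maps and then through $p=(1+\omega)/(1-\omega)$ produces the displayed degree-three rational function. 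In each case the Schur parameters are uniquely pinned down by $c_1,c_2,c_3$, which forces uniqueness of $p$.

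I expect the main labour to be the back-substitution bookkeeping: computing $d_3$ through the nested M\"obius transforms while correctly tracking the $1-|p_k|^2$ factors and the conjugates, and especially simplifying $p=(1+\omega)/(1-\omega)$ into the exact rational normal forms in the two non-trivial uniqueness cases. One must also verify that the parametrisation and the uniqueness claims remain valid in the degenerate cases where the algorithm halts before the third step, the remaining Schur parameters then being free.
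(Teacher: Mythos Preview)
Your Schur-algorithm derivation is correct and is in fact the classical route to these parametric formulas; the computations of $d_1,d_2,d_3$ from the nested M\"obius inversions and the subsequent substitution into $c_1,c_2,c_3$ check out, and your handling of the terminating cases yields exactly the rational extremal functions displayed. Note, however, that the paper does not supply a proof of this lemma at all: it is quoted as a known result, with the $c_2$ formula attributed to Pommerenke, the $c_3$ formula to Libera--Zlotkiewicz, and the present form (including the extremal functions) to Cho--Kowalczyk--Lecko. So there is no ``paper's own proof'' to compare against; your proposal simply reconstructs the standard argument underlying those references.
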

\begin{lem}\cite{5}\label{lem2} Let $A,B,C\in\mathbb{R}$ and 
\beas Y(A,B,C):= \max_{z\in\ol{\mathbb{D}}}\left(\left|A+Bz+Cz^2\right|+1-|z|^2\right).\eeas
\item[(i)] If $AC\geq0$, then 
\beas Y(A,B,C)=\left\{\begin{array}{lll}
|A|+|B|+|C|,&|B|\geq 2\left(1-|C|\right),\\
1+|A|+\frac{B^2}{4\left(1-|C|\right)}, &|B|<2\left(1-|C|\right).
\end{array}\right.\eeas
\item[(ii)] If $AC<0$, then 
\beas Y(A,B,C)=\left\{\begin{array}{lll}
1-|A|+\frac{B^2}{4\left(1-|C|\right)},&-4AC\left(C^{-2}-1\right)\leq B^2\wedge |B|<2\left(1-|C|\right),\\
1+|A|+\frac{B^2}{4\left(1+|C|\right)}, &B^2<\min\left\{4\left(1+|C|\right)^2,-4AC\left(C^{-2}-1\right)\right\},\\
R(A,B,C),&\text{Otherwise},
\end{array}\right.\eeas
where
\beas R(A,B,C)=\left\{\begin{array}{lll}
|A|+|B|+|C|,&|C|\left(|B|+4|A|\right)\leq |AB|,\\
-|A|+|B|+|C|, &|AB|\leq |C|\left(|B|-4|A|\right),\\
\left(|A|+|C|\right)\sqrt{1-\frac{B^2}{4AC}},&\text{Otherwise}.
\end{array}\right.\eeas 
\end{lem}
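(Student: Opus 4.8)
The plan is to reduce the two-real-dimensional maximization to two successive one-variable problems: first maximize over the argument of $z$ in polar coordinates, then over $r=|z|$. Writing $z=re^{i\theta}$ with $r\in[0,1]$ and $x=\cos\theta\in[-1,1]$, and expanding $|A+Bz+Cz^{2}|^{2}=(A+Bz+Cz^{2})(A+B\bar z+C\bar z^{2})$ with the identities $z+\bar z=2rx$, $z^{2}+\bar z^{2}=2r^{2}(2x^{2}-1)$ and $z\bar z^{2}+z^{2}\bar z=2r^{3}x$, one obtains
\[
|A+Bz+Cz^{2}|^{2}=4ACr^{2}x^{2}+2Br(A+Cr^{2})x+\bigl(A^{2}+B^{2}r^{2}+C^{2}r^{4}-2ACr^{2}\bigr),
\]
a quadratic in $x$ whose leading coefficient $4ACr^{2}$ carries the sign of $AC$; this single sign is the source of the dichotomy in the statement. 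Since $1-|z|^{2}=1-r^{2}$ does not depend on $\theta$, for fixed $r$ I need only the maximum of this quadratic over $x\in[-1,1]$. I would first normalize signs: $z\mapsto-z$ gives $Y(A,B,C)=Y(A,-B,C)$ and $|A+Bz+Cz^{2}|=|-A-Bz-Cz^{2}|$ gives $Y(A,B,C)=Y(-A,-B,-C)$, so I may assume $B\ge0$ and fix the sign of $A$, consistent with the statement depending only on $|A|,|B|,|C|,B^{2}$ and $AC$.

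In Case (i), $AC\ge0$, the quadratic is convex in $x$, hence maximized at an endpoint $x=\pm1$, i.e.\ at the real points $z=\pm r$. As $A$ and $Cr^{2}$ share a sign, $\max_{\theta}|A+Bz+Cz^{2}|=|A+Cr^{2}|+|B|r=|A|+|B|r+|C|r^{2}$, and the problem collapses to maximizing $\psi(r):=|A|+1+|B|r+(|C|-1)r^{2}$ over $[0,1]$. This is elementary: if $|C|\ge1$, or if the vertex $r^{*}=|B|/\bigl(2(1-|C|)\bigr)$ satisfies $r^{*}\ge1$ (equivalently $|B|\ge2(1-|C|)$), the maximum is at $r=1$, giving $Y=|A|+|B|+|C|$; otherwise $r^{*}\in(0,1)$ and $\psi(r^{*})=1+|A|+B^{2}/\bigl(4(1-|C|)\bigr)$. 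These are exactly the two branches of part (i).

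Case (ii), $AC<0$, is where the work concentrates. Now the quadratic in $x$ is concave, so its maximum over $[-1,1]$ is at the vertex $x_{0}=-B(A+Cr^{2})/(4ACr)$ when $x_{0}\in[-1,1]$, and at the nearer endpoint otherwise. The endpoint case reduces again to real $z$ and, at the active radius, produces the first two branches $|A|+|B|+|C|$ and $-|A|+|B|+|C|$ of $R(A,B,C)$ according to the sign of $|A|-|C|$; the conditions $|C|(|B|+4|A|)\le|AB|$ and $|AB|\le|C|(|B|-4|A|)$ precisely encode that $x_{0}$ falls outside $(-1,1)$ at $r=1$. When $x_{0}$ is interior, substituting it back gives $\max_{\theta}|A+Bz+Cz^{2}|^{2}=Q(r)$ for an explicit rational $Q$, and one then maximizes $\sqrt{Q(r)}+1-r^{2}$ over $[0,1]$. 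As a check that the scheme is correct, evaluating at $r=1$ collapses $Q(1)$ to $(|A|+|C|)^{2}\bigl(1-B^{2}/(4AC)\bigr)$, so $\sqrt{Q(1)}=(|A|+|C|)\sqrt{1-B^{2}/(4AC)}$, which is exactly the third branch of $R(A,B,C)$.

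The delicate and main obstacle is the coupled two-variable bookkeeping: over the region $AC<0$ I must determine which of three competing candidates—an interior $r$-critical value, the boundary value at $r=1$, or a $\theta$-endpoint value—is the global maximum, and verify that the stated regions tile the parameter space without overlap. Concretely, the thresholds $B^{2}<4(1+|C|)^{2}$ and $-4AC(C^{-2}-1)\le B^{2}$ arise respectively from requiring $x_{0}\in[-1,1]$ across the relevant radii and from locating the $r$-critical point inside $[0,1]$; I would resolve each regime by pairwise comparison of the candidate values and by inspecting the sign of the $r$-derivative on the vertex-interior branch. I expect the first subcase of (ii) to correspond to an interior $\theta$-direction with $r=1$ active, the second to an interior $r$-critical point with the vertex inside, and the ``Otherwise'' case to the endpoint/boundary competition captured by $R$. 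The normalizations $B\ge0$ and fixed $\operatorname{sign}A$ keep the number of sign combinations manageable throughout.
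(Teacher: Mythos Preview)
The paper does not prove this lemma; it is quoted from Choi--Kim--Sugawa \cite{5} and used as a black box in the proof of Theorem~\ref{Th1}, so there is no in-paper argument to compare against. Your reduction via $z=re^{i\theta}$, $x=\cos\theta$, and the observation that $|A+Bz+Cz^{2}|^{2}$ is a quadratic in $x$ with leading coefficient $4ACr^{2}$ is precisely the method of \cite{5}, and your treatment of case~(i) is complete and correct.

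For case~(ii) your outline is sound and the sanity check $\sqrt{Q(1)}=(|A|+|C|)\sqrt{1-B^{2}/(4AC)}$ is right, but what remains is more than bookkeeping. As $r$ varies, the vertex $x_{0}(r)=-B(A+Cr^{2})/(4ACr)$ moves in and out of $[-1,1]$, so $r\mapsto\max_{\theta}|A+Bre^{i\theta}+Cr^{2}e^{2i\theta}|$ is only piecewise smooth and one must compare interior critical values on each piece against junction and boundary values. In particular, the fact that $x_{0}(1)\notin(-1,1)$ (which, as you correctly observe, is what the first two $R$-conditions encode) does not by itself place the global maximum at $r=1$; one still has to exclude an interior-vertex maximum at some smaller radius. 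Your guess about which branch corresponds to which configuration is also not quite right: the value $1-|A|+B^{2}/(4(1-|C|))$ in the first subcase of~(ii) cannot come from $r=1$ (where $1-r^{2}=0$), so that branch arises from an interior radial critical point with $\theta$ at an endpoint, not an interior $\theta$ with $r=1$. None of this invalidates your plan---it is exactly what \cite{5} carries out---but the case separation you defer is the substance of the proof.
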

\section{second Hankel determinant for logarithmic inverse coefficients} 
\noindent In the following result, we obtain the sharp bound for the second Hankel determinant of logarithmic inverse coefficients for functions in the class $\mathcal{P}(M)$.
\begin{theo}\label{Th1}
Let $f\in\mathcal{P}(M)$ with $0<M\leq 1/\log4 (\approx 0.7213475)$. Then 
\bea\label{fe1}  \left|H_{2,1}\left(F_{f^{-1}}/2\right)\right|\leq\left\{\begin{array}{ll}
M^2/36,&0<M\leq \dfrac{1}{39}(6+\sqrt{114})\\[3mm]
\dfrac{M^2}{144}\left(39M^2-12M+2\right),&\dfrac{1}{39}(6+\sqrt{114})<M\leq 1/\log4.\end{array}\right.\eea
The inequality (\ref{fe1}) is sharp.\end{theo}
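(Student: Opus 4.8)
The plan is to parametrise $\mathcal{P}(M)$ by Carath\'eodory functions and reduce the estimate to the extremal problem solved in Lemma \ref{lem2}. Observe first that $f\in\mathcal{P}(M)$ exactly when $p(z):=1+zf''(z)/M$ lies in $\mathcal{P}$, since then $p(0)=1$ and $\text{Re}\,p(z)=1+\text{Re}(zf''(z))/M>0$. Writing $p$ as in (\ref{eq4}) and comparing coefficients in $zf''(z)=M(p(z)-1)$ gives $a_2=Mc_1/2$, $a_3=Mc_2/6$, $a_4=Mc_3/12$; substituting these into (\ref{eq3}) yields
\[
H_{2,1}\!\left(F_{f^{-1}}/2\right)=\frac{M^2}{48}\Big(\tfrac{13M^2}{16}c_1^4-\tfrac{M}{2}c_1^2c_2-\tfrac13c_2^2+\tfrac12c_1c_3\Big).
\]
Since $H_{2,1}(F_{f^{-1}}/2)$ is rotation invariant (as noted right after (\ref{eq3})) and $p(e^{i\theta}z)$ has coefficients $c_ne^{in\theta}$, I may assume $c_1\ge0$, so $c_1=2p_1$ with $p_1\in[0,1]$.

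Next I would substitute the representations of Lemma \ref{lem1} for $c_2,c_3$ in terms of $p_1\in[0,1]$ and $p_2,p_3\in\overline{\mathbb{D}}$ and collect terms, which gives
\[
\frac{48}{M^2}\,H_{2,1}\!\left(F_{f^{-1}}/2\right)=\alpha p_1^4+\beta p_1^2(1-p_1^2)p_2-\tfrac{2}{3}(1-p_1^2)(2+p_1^2)p_2^2+2p_1(1-p_1^2)(1-|p_2|^2)p_3,
\]
where $\alpha=13M^2-4M+\tfrac23$ (which is positive, its discriminant being $-56/3$) and $\beta=\tfrac43-4M$. Taking moduli and using $|p_3|\le1$ removes $p_3$, and the problem becomes to bound the remaining expression in $p_1\in[0,1]$ and $p_2\in\overline{\mathbb{D}}$ by $\max\{\tfrac43,\alpha\}$; these are $48/M^2$ times the two asserted bounds in (\ref{fe1}), and they coincide exactly when $39M^2-12M-2=0$, i.e. $M=(6+\sqrt{114})/39$.

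I would then split into three cases. For $p_1=0$ the expression equals $-\tfrac43p_2^2$, of modulus at most $\tfrac43$, giving the first branch of (\ref{fe1}); for $p_1=1$ it equals $\alpha$, giving the second branch. For $p_1\in(0,1)$ I would factor out $2p_1(1-p_1^2)$ and write the bound as $2p_1(1-p_1^2)\,Y(A,B,C)$ with $A=\alpha p_1^3/(2(1-p_1^2))$, $B=\beta p_1/2$ and $C=-(2+p_1^2)/(3p_1)$. Since $A>0$ and $C<0$ we have $AC<0$, and $|C|=(2+p_1^2)/(3p_1)\ge1$ on $(0,1]$, so the first two sub-cases in Lemma \ref{lem2}(ii) cannot occur and $Y(A,B,C)=R(A,B,C)$. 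Running through the three branches of $R$, in each case $2p_1(1-p_1^2)R(A,B,C)$ simplifies (after clearing the factor $1-p_1^2$ from the denominator of $A$) to an explicit function of the single variable $p_1$ depending on $M$, which one then shows does not exceed $\max\{\tfrac43,\alpha\}$ on $(0,1)$.

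The hard part is exactly this last step: pushing the case division of Lemma \ref{lem2} through for all $p_1\in(0,1)$ and verifying the one-variable maximisations it produces — locating the critical points of the functions of $p_1$ that arise and checking that none of them beats the endpoint values at $p_1=0$ and $p_1=1$ — is where the computational effort concentrates. For sharpness, the function with $zf''(z)=2Mz^2/(1-z^2)$ (the case $p_1=0$, $p_2=1$, with $p(z)=(1+z^2)/(1-z^2)$) realises the first branch of (\ref{fe1}), and the function with $zf''(z)=2Mz/(1-z)$ (the case $p_1=1$, with $p(z)=(1+z)/(1-z)$) realises the second; both belong to $\mathcal{P}(M)$, and substituting their coefficients into (\ref{eq3}) attains equality.
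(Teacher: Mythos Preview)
Your approach is essentially the same as the paper's: the same parametrisation $zf''(z)=M(p(z)-1)$, the same substitution of Lemma~\ref{lem1} to reach an expression in $p_1,p_2,p_3$, the same reduction via the triangle inequality on $p_3$ to the functional $Y(A,B,C)$ of Lemma~\ref{lem2}, and the same endpoint extremals for sharpness. Your values of $A,B,C$ agree with the paper's, and your observation that $|C|=(2+p_1^2)/(3p_1)\ge1$ on $(0,1]$ is a genuine streamlining --- it dispatches the first two sub-cases of Lemma~\ref{lem2}(ii) in one line, whereas the paper checks those conditions separately (its Sub-cases~3.1 and~3.2).

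That said, the part you flag as ``the hard part'' and leave undone is exactly where almost all of the paper's work lies. The paper does not simply ``check that no critical point beats the endpoints'': it splits the three branches of $R(A,B,C)$ according to the sign of $B$ (i.e.\ $M\lessgtr1/3$), locates the exact $p_1$-intervals on which each branch is active by solving quadratics in $p_1^2$ with $M$-dependent coefficients, and then maximises the resulting one-variable functions on those intervals. Several of these maximisations are delicate --- for instance, on the ``otherwise'' branch of $R$ the paper must show a certain function of $p_1^2$ is convex by computing its second derivative explicitly --- and the final step requires comparing six candidate bounds (your $4/3$ and $\alpha$ among them) across overlapping $M$-ranges to extract the piecewise maximum. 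Your outline is correct, but what remains is the bulk of the proof, not a routine tail.
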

\begin{proof} Fix $0<M\leq 1/\log4$ and let $f\in\mathcal{P}(M)$ be of the form (\ref{eq1}). Then $\text{Re}(zf'')>-M$ for $z\in\mathbb{D}$. Thus, it follows that 
\bea\label{e1} zf''(z)=Mp(z)-M,\;z\in\mathbb{D}\eea
for some $p\in\mathcal{P}$ of the form (\ref{eq4}).
Since the class $\mathcal{P}$ and $H_{2,1}\left(F_{f^{-1}}/2\right)$ are invariant under rotations, we may assume that $c_1\in [0, 2]$ \cite[p. 80, \textrm{Theorem} 3]{12}. 
With the help of \textrm{Lemma \ref{lem1}}, we have $p_1\in[0, 1]$ and $p_2,p_3\in\ol{\mathbb{D}}$. Using (\ref{eq1}), (\ref{eq4}) and (\ref{e1}), we deduce that 
\bea\label{e2} a_2=\frac{M}{2}c_1, a_3=\frac{M}{6}c_2, a_4=\frac{M}{12}c_3.\eea
In view of \textrm{Lemma \ref{lem1}} and from (\ref{eq3}) and (\ref{e2}), we have 
\beas\label{e3} H_{2,1}\left(F_{f^{-1}}/2\right)&=&\frac{1}{48}\left(13a_2^4-12a_2^2a_3-12a_3^2+12a_2a_4\right)\\
&=&\frac{1}{48}\left(\left(13M^4-4M^3+\frac{2M^2}{3}\right)p_1^4-\frac{2}{3}M^2(1-p_1^2)\left(2+p_1^2\right)p_2^2\right.\nonumber\\
&&\left.+\left(\frac{4M^2}{3}-4M^3\right)p_1^2p_2(1-p_1^2)+2M^2p_1p_3(1-p_1^2)(1-|p_2|^2)\right).\eeas
Now, we will discuss the following cases involving $p_1$.\\
{\bf Case 1.} Suppose that $p_1=1$. Then for $M>0$, from (\ref{e3}), we have 
\bea\label{ee1} \left|H_{2,1}\left(F_{f^{-1}}/2\right)\right|=\frac{1}{144}\left(39M^4-12M^3+2M^2\right).\eea
{\bf Case 2.} Suppose that $p_1=0$. Then for $M>0$, from (\ref{e3}), we have 
\bea\label{ee2} \left|H_{2,1}\left(F_{f^{-1}}/2\right)\right|=\frac{M^2}{36}|p_2|^2\leq \frac{M^2}{36}.\eea
{\bf Case 3.} Suppose that $p_1\in(0,1)$. Since $|p_3|\leq 1$, so by applying the triangle inequality in (\ref{e3}), we obtain
\bea\label{ee3} \left|H_{2,1}\left(F_{f^{-1}}/2\right)\right|&\leq& \frac{1}{48}\left|\left(13M^4-4M^3+\frac{2M^2}{3}\right)p_1^4-\frac{2}{3}M^2(1-p_1^2)\left(2+p_1^2\right)p_2^2\right.\nonumber\\
&&\left.+\left(\frac{4M^2}{3}-4M^3\right)p_1^2p_2(1-p_1^2)\right|+\frac{1}{48}\left|2M^2p_1(1-p_1^2)(1-|p_2|^2)\right|\nonumber\\
&=&\frac{M^2p_1(1-p_1^2)}{24}\left(|A+Bp_2+Cp_2^2|+1-|p_2|^2\right),\eea
where 
\bs\beas A=\frac{(39M^2-12M+2)p_1^3}{6(1-p_1^2)},\;B=\left(\frac{2}{3}-2M\right)p_1=\left\{\begin{array}{lll}\geq0&\text{for} \;M\leq 1/3\\<0&\text{for}\; M> 1/3\end{array}\right.\;\text{and}\; C=-\frac{(2+p_1^2)}{3p_1}.\eeas\es
Since $39M^2-12M+2>0$ for all $M>0$, so it is easy to observe that $AC<0$. In view of \textrm{Lemma \ref{lem2}}, we will discuss the following circumstances.\\[2mm]
{\bf Sub-case 3.1.} Note that the inequality $-4AC\left(C^{-2}-1\right)\leq B^2$ implies that
\bs\bea\label{e4} &&\frac{(39M^2-12M+2)p_1^3}{6(1-p_1^2)}\frac{(2+p_1^2)}{3p_1}\left(\frac{9p_1^2}{(2+p_1^2)^2}-1\right)-\frac{(2-6M)^2}{9}p_1^2\leq0\nonumber\\\text{{\it i.e.,}}
&&\frac{2p_1^2\left[21M^2p_1^4-(213M^2-72M+12)p_1^2+(192M^2-72M+12)\right]}{9(1-p_1^2)(2+p_1^2)}\geq 0, \eea\es
which is hold for $M>0$ and $p_1\in(0,1)$. However, for $0<M\leq 1/3$, the inequality $|B|<2\left(1-|C|\right)$ is equivalent to 
\beas \frac{(2-6M)}{3}p_1-2\left(1-\frac{(2+p_1^2)}{3p_1}\right)<0,
\;\text{{\it i.e.,}}\; 2(2-3M)p_1^2-6p_1+4<0,\eeas
which is true if, and only if, $p_1>0$ and $M > (2-3p_1+ 2p_1^2)/(3p_1^2)$. It is easy to see that $(2-3p_1+ 2p_1^2)/(3p_1^2)>1/3$  for $0<p_1<1$, which contradicts the fact that $0<M\leq 1/3$, as illustrated in {\bf Figure} \ref{fig1}.
Again, for $1/3<M\leq 1/\log4$, the inequality $|B|<2\left(1-|C|\right)$ implies that
\beas \frac{(6M-2)}{3}p_1-2\left(1-\frac{(2+p_1^2)}{3p_1}\right)<0,
\;\text{{\it i.e.,}}\;6Mp_1^2-6p_1+4<0,\eeas
which is true if, and only if, $p_1 > 0$ and $M<(-2+3p_1)/(3p_1^2)$. It is easy to see that $(-2+3p_1)/(3p_1^2)<1/3$ for $0 < p_1 < 1$, which contradicts the fact that $M<(-2+3p_1)/(3p_1^2)$ with $1/3<M\leq 1/\log4$ and $0 < p_1 < 1$, and it's shown in {\bf Figure} \ref{fig2}.
\begin{figure}[H]
\begin{minipage}[c]{0.5\linewidth}
\centering
\includegraphics[scale=0.7]{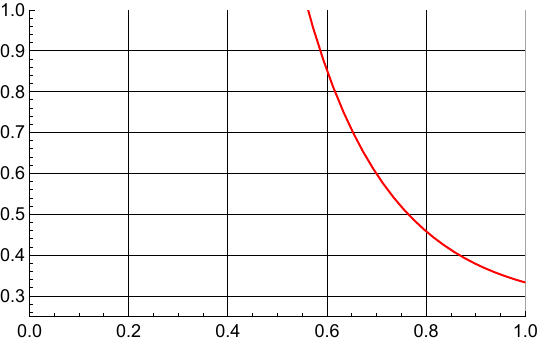}
\caption{The graph of $(2-3p_1+ 2p_1^2)/(3p_1^2)$ for $p_1\in\left(0,1\right)$}
\label{fig1}
\end{minipage}
\hfill
\begin{minipage}[c]{0.49\linewidth}
\centering
\includegraphics[scale=0.7]{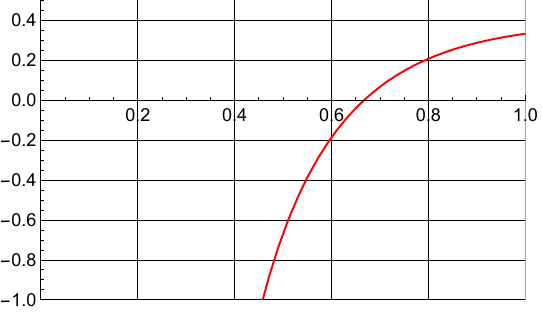}
\caption{The graph of $(-2+3p_1)/(3p_1^2)$ for $p_1\in\left(0,1\right)$}
\label{fig2}
\end{minipage}
\end{figure}
\noindent{\bf Sub-case 3.2.} Note that 
\beas &&4\left(1+|C|\right)^2=4\left(1+\frac{(2+p_1^2)}{3p_1}\right)^2=\frac{4(p_1^4+6p_1^3+13p_1^2+12p_1+4)}{9p_1^2}>0\\\text{and}
&&-4AC\left(C^{-2}-1\right)=-\frac{2p_1^2}{9}\cdot\frac{(39M^2-12M+2)(p_1^4-5p_1^2+4)}{(1-p_1^2)(2+p_1^2)}<0.\eeas
Therefore, $\min\left\{4\left(1+|C|\right)^2,-4AC\left(C^{-2}-1\right)\right\}=-4AC\left(C^{-2}-1\right)$. Also from (\ref{e4}), we know that 
\beas-4AC\left(C^{-2}-1\right)\leq B^2\;\text{hold for}\;M>0\;\text{and}\; p_1\in(0,1).\eeas
Therefore the inequality 
\beas B^2<\min\left\{4\left(1+|C|\right)^2,-4AC\left(C^{-2}-1\right)\right\}=-4AC\left(C^{-2}-1\right)\eeas
does not hold for $M>0$ and $p_1\in(0,1)$.\\
{\bf Sub-case 3.3.}  Corresponding to the values of $M$, we consider the following cases.\\
{\bf Sub-case 3.3.1} For $0<M\leq 1/3$, the inequality $|C|\left(|B|+4|A|\right)-|AB|\leq 0$ is equivalent to
\beas
(117M^3+3M^2)p_1^4+6(26M^2-7M+1)p_1^2+4(1-3M)\leq 0,\eeas 
is not true for $p_1\in(0,1)$, since $26M^2-7M+1>0$ for $M\in(0, 1/3]$. \\
{\bf Sub-case 3.3.2} For $1/3<M\leq 1/\log4$, the inequality $|C|\left(|B|+4|A|\right)-|AB|\leq 0$ is equivalent to
\bea\label{N1} 
\Omega_1(p_1^2)\geq 0,\eea
where $\Omega_1(t)=(117M^3-153M^2+48M-8)t^2+(-156M^2+54M-10)t+4(1-3M)$ with $t\in(0,1)$.
Since $M\in(1/3,1/\log4]$, so $\Delta(M):=12 (19 - 186 M + 899 M^2 - 2172 M^3 + 2496 M^4)>0$. Now $\Omega_1(t)=0$ gives
\bea\label{MI1}\left\{\begin{array}{lll} t_1=\dfrac{78M^2-27M+5- \sqrt{3(19 - 186 M + 899 M^2 - 2172 M^3 + 2496 M^4)}}{117M^3-153M^2+48M-8}\\[3mm]
 t_2=\dfrac{78M^2-27M+5+\sqrt{3(19 - 186 M + 899 M^2 - 2172 M^3 + 2496 M^4)}}{117M^3-153M^2+48M-8}.\end{array}\right.\eea
Note that $117M^3-153M^2+48M-8<0$ and $78M^2-27M+5>0$ for $M\in(1/3,1/\log4]$.
It is easy to see that $t_2<0$ and we also claim that $t_1<0$. As a matter of fact that the inequality $t_1 < 0$ is equivalent to the inequality
\beas \Psi(M):=351 M^4 - 576 M^3 + 297 M^2 - 72 M + 8<0,\eeas
which are true for $1/3<M\leq 1/\log4$, as illustrated in {\bf Figure} \ref{fig3}. Thus, the inequality (\ref{N1}) is false.
\begin{figure}[H]
\includegraphics[scale=0.8]{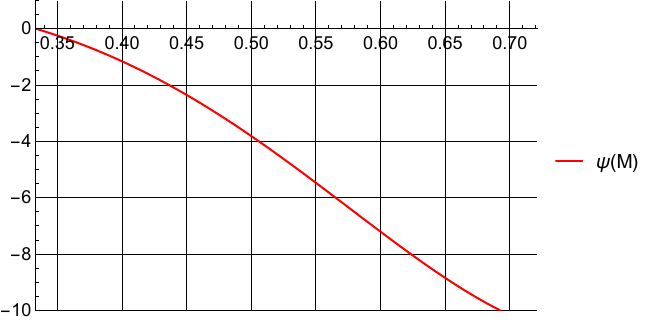}
\caption{The graph of the polynomial $\Psi(M)$ for $1/3<M\leq 1/\log4$}
\label{fig3}
\end{figure}
\noindent {\bf Sub-case 3.4.}  Corresponding to the values of $M$, we consider the following cases.
{\bf Sub-case 3.4.1}
For $0<M<1/3$, the inequality $|AB|-|C|\left(|B|-4|A|\right)\leq 0$ is equivalent to the inequality (\ref{N1}).
By using similar arguments to those of \textrm{Sub-case 3.3}, we get (\ref{MI1}) with $\Delta(M):=12 (19 - 186 M + 899 M^2 - 2172 M^3 + 2496 M^4)>0$, $117M^3-153M^2+48M-8<0$ and $78M^2-27M+5>0$ for $M\in(0,1/3)$.
It is easy to see that $t_2<0$ and we also claim that $0<t_1<1$. As a matter of fact that both the inequality $t_1 >0$ and $t_1<1$ are respectively equivalent to the inequalities
\beas \Psi_1(M)>0\;\text{and}\;\Psi_2(M)>0,\eeas
which are true for $M\in(0,1/3)$, where
\beas &&\Psi_1(M)=351 M^4 - 576 M^3 + 297 M^2 - 72 M + 8\;\text{and} \\
&&\Psi_2(M)=13689 M^6 - 54054 M^5 + 63423 M^4 - 31176 M^3 + 8934 M^2 -1392 M + 112\eeas
and it's shown in {\bf Figure} \ref{fig4}.
\begin{figure}[H]
\includegraphics[scale=0.95]{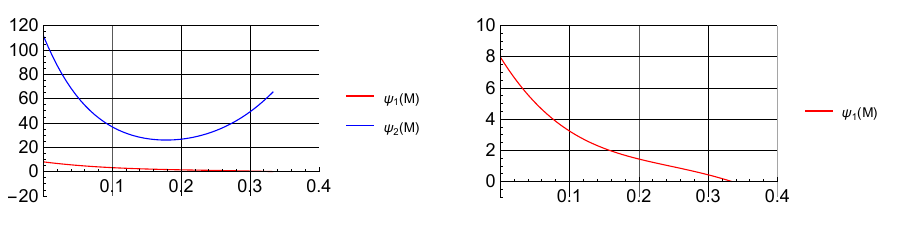}
\caption{The graph of the polynomials $\Psi_1(M)$ and $\Psi_2(M)$ for $0<M<1/3$}
\label{fig4}
\end{figure}
Thus, the inequality (\ref{N1}) is valid for $\sqrt{t_1}\leq p_1<1$ whenever $M\in(0,1/3)$.  In view of \textrm{Lemma \ref{lem2}} and (\ref{ee3}), we have
\bea\label{M2} \left|H_{2,1}\left(F_{f^{-1}}/2\right)\right|&\leq& \frac{M^2p_1(1-p_1^2)}{24}\left(-|A|+|B|+|C|\right)\nonumber\\[2mm]
&=&\frac{M^2}{144}\left[4 +(2-12M)p_1^2-\left(8-24M+39 M^2\right)p_1^4\right].\eea
Let $\Phi_1(x)=-\left(39 M^2-24M+8\right)x^2 +2(1-6M)x+4$, where $x\in[t_1,1)$. Then 
\beas &&\Phi'_1(x)=-2\left(39 M^2-24M+8\right)x +2(1-6M), \Phi''_1(x)=-2\left(39 M^2-24M+8\right)<0.\eeas
It is clear that the function $\Phi_1(x)$ is decreasing for $1/6\leq M<1/3$ and thus, we have  
\beas\Phi_1(x)\leq \Phi_1(t_1)\;\text{for}\;x\in[t_1,1).\eeas
For $0<M< 1/6$, the function $\Phi_1(x)$ has a unique critical point which is $y_0=(1-6M)/(39 M^2-24M+8)$ and the function $\Phi_1(x)$ is increasing (resp. decreasing) according as $x<y_0$ (resp. $x>y_0$). Since $39 M^2-24M+8>$ for all $M\in(0,1/3)$, so $y_0>0$ for $0<M< 1/6$. It remains to check whether $t_1\leq y_0<1$.
The inequality $y_0\geq t_1$ is equivalent to 
\beas\phi_1(M):&=&876096 M^8 - 1908036 M^7 + 1561005 M^6 - 584694 M^5 + 24093 M^4 \\
&&+ 66768 M^3 - 28496 M^2 + 5376 M - 448\geq 0,\eeas
which is not true for $M\in(0,1/6)$, as illustrated in {\bf Figure} \ref{fig5}.
\noindent Thus $\Phi_1(x)$ is decreasing for $M\in(0,1/6)$. For $x\in[t_1,1)$, we have
\beas\Phi_1(x)\leq \Phi_1(t_1)&\text{for}\;0<M< 1/3.\eeas
From (\ref{M2}) and for $x\in[t_1,1)$, we have
\bs\bea\label{M6}\left|H_{2,1}\left(F_{f^{-1}}/2\right)\right|\leq \frac{M^2\left(\psi_1(M)+\psi_2(M)\sqrt{3(19 -186 M+899 M^2-2172M^3 +2496 M^4)}\right)}{6(117M^3-153M^2+48M-8)^2}\nonumber\\\eea\es 
for $0<M< 1/3$, where 
\bs\bea\label{rm2}\left\{\begin{array}{ll}
\psi_1(M)=-24336 M^6+33345 M^5-21801 M^4+8415 M^3-2023 M^2+288 M-20,\\[2mm]
\psi_2(M)=312M^4-330M^3+159M^2-36M+4.\end{array}\right.\eea\es
{\bf Sub-case 3.4.2}
For $1/3\leq M\leq 1/\log4$, the inequality $|AB|-|C|\left(|B|-4|A|\right)\leq 0$ is equivalent to the inequality 
\bea\label{M3}
\Omega_2(p_1^2)\leq 0,\eea
where $\Omega_2(t)=(117M^3+3M^2)t^2+2(78M^2-21M+3)t+4(1-3M)$ with $t\in(0,1)$.
Since $M\in[1/3,1/\log4]$, so $\Delta(M):=12 (3 - 42 M + 299 M^2 - 1236 M^3 + 2496 M^4)>0$. Now $\Omega_2(t)=0$ gives
\bea\label{M1}\left\{\begin{array}{lll} t_3=\dfrac{-78M^2+21M-3 -\sqrt{3(3 - 42 M + 299 M^2 - 1236 M^3 + 2496 M^4)}}{117M^3+3M^2}\\[3mm]
 t_4=\dfrac{-78M^2+21M-3 +\sqrt{3(3 - 42 M + 299 M^2 - 1236 M^3 + 2496 M^4)}}{117M^3+3M^2}.\end{array}\right.\eea
Note that $117M^3+3M^2>0$ and $78M^2-21M+3>0$ for $M\in[1/3,1/\log4]$.
It is easy to see that $t_3<0$ and we also claim that $0<t_4<1$. As a matter of fact that both the inequality $t_4>0$ and $t_4<1$ are respectively equivalent to the inequalities
\beas \Psi_3(M)>0\;\text{and}\;\Psi_4(M)>0,\eeas
which are true for $M\in(1/3,1/\log4]$, where
\beas &&\Psi_3(M)= 1404 M^2- 432 M -12\;\text{and} \\
&&\Psi_4(M)= 13689 M^4+ 18954 M^3- 5841 M^2+ 1008 M +30,\eeas
as illustrated in {\bf Figure} \ref{fig6}.
At $M=1/3$, the inequality (\ref{M3}) becomes $14 p_1^2(p_1^2+2)/3\leq 0$, which is not possible for $p_1\in(0,1)$. 
\begin{figure}[H]
\begin{minipage}[c]{0.5\linewidth}
\includegraphics[scale=0.65]{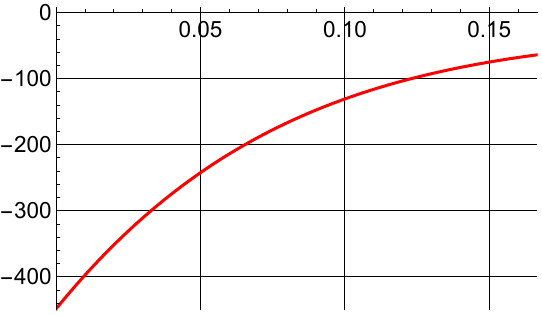}
\caption{The graph of the polynomial $\phi_1(M)$ for $0<M< 1/6$}
\label{fig5}
\end{minipage}
 \hfill
\begin{minipage}[c]{0.49\linewidth}
\includegraphics[scale=0.65]{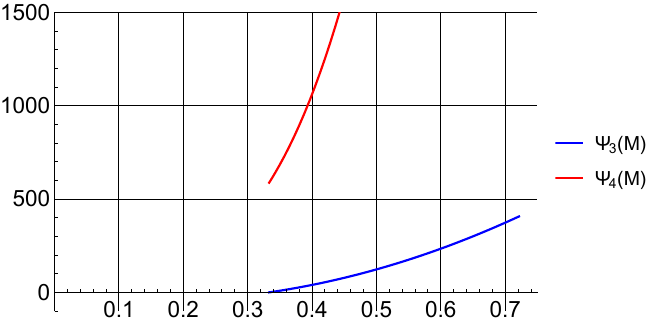}
\caption{The graph of the polynomials $\Psi_3(M)$ and $\Psi_4(M)$ respectively within $1/3<M\leq 1/\log4$}
\label{fig6}
\end{minipage}
\end{figure}
\noindent Thus, the inequality (\ref{M3}) is valid for $0<p_1\leq \sqrt{t_4}$ whenever $M\in(1/3,1/\log4]$.  In view of \textrm{Lemma \ref{lem2}} and (\ref{ee3}), we have
\bea\label{M4} \left|H_{2,1}\left(F_{f^{-1}}/2\right)\right|&\leq& \frac{M^2p_1(1-p_1^2)}{24}\left(-|A|+|B|+|C|\right)\nonumber\\
&=&\frac{M^2}{144}\left[4 -6(1-2M)p_1^2-39 M^2p_1^4\right].\eea
Let $\Phi_2(x)=-39 M^2x^2 -6(1-2M)x+4$, where $x\in(0,t_4]$. Then 
\beas &&\Phi'_2(x)=-78M^2x -6(1-2M), \Phi''_2(x)=-78M^2<0.\eeas
It is clear that the function $\Phi_2(x)$ is decreasing for $1/3<M\leq 1/2$ and thus, we have  
\beas\Phi_2(x)\leq \Phi_2(0)=4\;\text{for}\;x\in(0,t_4].\eeas
For $1/2<M\leq 1/\log4$, the function $\Phi_2(x)$ has a unique critical point which is $y_1=(2M-1)/(13M^2)$ and the function $\Phi_2(x)$ is increasing (resp. decreasing) according as $x<y_1$ (resp. $x>y_1$). Since $M\in(1/2,1/\log4]$, so $y_1>0$. It remains to check whether $y_1\leq t_4$.
The inequality $y_1\leq t_4$ is equivalent to the inequality 
\beas\phi_2(M):= - 292032 M^4+ 331812 M^3 - 85719 M^2 + 6354 M+225\geq 0,\eeas
which is true for $M\in(1/2,1/\log4]$, as illustrated in {\bf Figure} \ref{fig7}.
\noindent For $x\in(0,t_4]$, we have
\beas\Phi_2(x)\leq\left\{\begin{array}{lll}
 4&\text{for}\;1/3<M\leq 1/2,\\[2mm]
 \Phi_2(y_1)=(64 M^2 - 12 M +3 )/(13 M^2)&\text{for}\;1/2<M\leq 1/\log4.\end{array}\right.\eeas
From (\ref{M4}) and for $x\in(0,t_4]$, we have
\bea\label{MI6}\left|H_{2,1}\left(F_{f^{-1}}/2\right)\right|\leq \left\{\begin{array}{lll}
 M^2/36&\text{for}\;1/3<M\leq 1/2,\\[2mm]
 (64 M^2 - 12 M +3 )/1872&\text{for}\;1/2<M\leq 1/\log4.\end{array}\right.\eea
 {\bf Sub-case 3.4.3} We now consider the case $$p_1\in\left\{\begin{array}{lll}
 (0,\sqrt{t_1}),&\text{when}\;M\in(0,\frac{1}{3}),\\[2mm]
  (\sqrt{t_4},1),&\text{when}\;M\in[1/3,1/\log4],\end{array}\right.$$
 where $t_1$ and $t_4$ are given in (\ref{MI1}) and (\ref{M1}) respectively. In view of \textrm{Lemma \ref{lem2}} and (\ref{ee3}), we have 
\bs\bea\label{e7}\left|H_{2,1}\left(F_{f^{-1}}/2\right)\right|&\leq& \frac{M^2p_1(1-p_1^2)}{24}\left(|A|+|C|\right)\sqrt{1-\frac{B^2}{4AC}}\nonumber\\[2mm]
&=&\frac{M^2}{144}\left\{(39M^2-12M)p_1^4-2p_1^2+4\right\}\sqrt{\frac{21M^2p_1^2+6(16M^2-6M+1)}{(39M^2-12M+2)(2+p_1^2)}}.\nonumber\\\eea\es
Let $\xi(x)=\xi_1(x)\sqrt{\xi_2(x)}$, where 
\beas x\in\left\{\begin{array}{lll}
 (0,t_1)&\text{for}\;M\in(0,1/3),\\[2mm]
  (t_4,1)&\text{for}\;M\in[1/3,1/\log4]\end{array}\right.\eeas
  with
 \beas \xi_1(x)=(39M^2-12M)x^2-2x+4\quad\text{and}\quad \xi_2(x)=\frac{21M^2x+6(16M^2-6M+1)}{(39M^2-12M+2)(2+x)}.\eeas 
It is evident that  
\be\label{Q1} \xi'_1(x)=2(39M^2-12M)x-2\quad\text{and}\quad\xi_2'(x)=\frac{-6 (9 M^2- 6 M+1)}{(39 M^2 - 12 M+2) (2 + x)^2}\leq 0.\ee
We now consider the following cases corresponding to the values of $M$.\\
{\bf Sub-case 3.4.3.1} Suppose $0<M< 1/3$. From (\ref{Q1}), it is evident that $\xi_2(x)$, $x\in(0,t_1)$ is decreasing and 
 $\xi_1(x)$, $x\in(0,t_1)$ is decreasing for $0<M\leq M_1$, where $M_1=4/13$ is the unique positive real root of the equation $39M^2-12M=0$. Now, 
for $4/13<M<1/3$, the function $\xi_1(x)$ has a unique critical point $y_3=1/(39M^2-12M)>0$ and the function $\xi_1(x)$, $x\in(0,t_1)$ is decreasing (resp. increasing) according 
as $x<y_3$ (resp. $x>y_3$). It remains to check whether $y_3<t_1$ for $4/13<M<1/3$.
The inequality $y_3<t_1$ is equivalent to $\phi_3(M):=2135484 M^8 - 4106700 M^7 + 2755701 M^6 - 823878 M^5 + 42201 M^4 + 45144 M^3 - 14208 M^2 + 1728 M - 64>0$, which is not true $4/13<M<1/3$ and it's shown in {\bf Figure} \ref{fig8}.
\begin{figure}[H]
\begin{minipage}[c]{0.5\linewidth}
\centering
\includegraphics[scale=0.7]{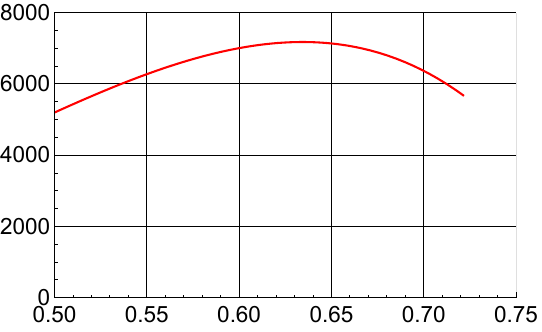}
\caption{The graph of the polynomial $\phi_2(M)$ for $1/2<M\leq 1/\log4$}
\label{fig7}
\end{minipage}
\hfill
\begin{minipage}[c]{0.49\linewidth}
\centering
\includegraphics[scale=0.7]{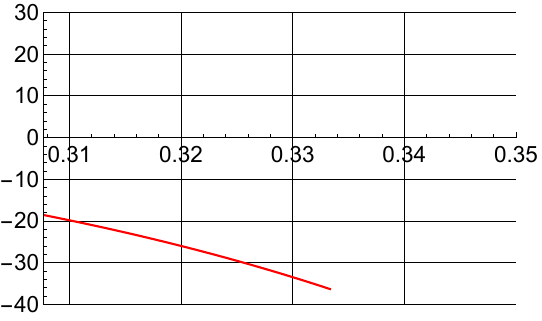}
\caption{The graph of the polynomial $\phi_3(M)$ for $4/13<M< 1/3$}
\label{fig8}
\end{minipage}
\end{figure}
\noindent Thus $\xi_1(x)$, $x\in(0,t_1)$ is decreasing for $4/13<M<1/3$. As a result, both the functions $\xi_1(x)$ and $\xi_2(x)$ are decreasing for $x\in(0,t_1)$ and $0<M< 1/3$. Hence, we derive from (\ref{e7}) that
\bea\label{Q2}\left|H_{2,1}\left(F_{f^{-1}}/2\right)\right|\leq \frac{M^2}{36}\sqrt{\frac{3(16M^2-6M+1)}{(39M^2-12M+2)}}\;\;\text{for}\;0<M< 1/3.\eea
{\bf Sub-case 3.4.3.2} Suppose $1/3\leq M\leq 1/\log4$. From (\ref{Q1}) it is easy to see that $\xi_2(x)$, $x\in(t_4,1)$ is a positive decreasing function.
We claim that $\xi(x)$ is a convex function, {\it i.e.,} we have to show that $\xi''(x)\geq 0$. Now 
\bs\beas\xi''(x)(\xi_2(x))^{3/2}&=&\xi_1''(x)(\xi_2(x))^2+\xi_1'(x)\xi_2(x)\xi_2'(x)+\frac{1}{2}\xi_1(x)\xi_2(x)\xi_2''(x)-\frac{1}{4}\xi_1(x)(\xi_2'(x))^2\\[2mm]
&=&\frac{9\left(x^4M^5\Psi_1(M) + x^3M^3\Psi_2(M)+x^2M \Psi_3(M) +x\Psi_4(M)+ \Psi_5(M)\right)}{(39 M^2-12M+2)^2 (2 + x)^4},\eeas\es
where 
$\Psi_1(x)=3822 M - 1176$, $\Psi_2(M)=45318 M^3 - 23772 M^2 + 4662 M - 504$, 
$\Psi_3(M)= 189657 M^5 - 129960 M^4 + 38178 M^3 - 6372 M^2 + 549 M - 36$, 
$\Psi_4(M)=369408 M^6 - 312096 M^5 + 117762 M^4 - 25464 M^3 + 3148 M^2 - 216 M + 2$, 
$\Psi_5(M)=319488 M^6 - 337920 M^5 + 162876 M^4 - 45456 M^3 + 7592 M^2 - 720 M + 28$. 

 It is easy to see that $\Psi_j(M)>0$ $(1\leq j\leq5)$ for all $M\in[1/3,1/\log4]$, as illustrated in in {\bf Figure} \ref{fig9}. Thus, we have $\xi''(x)\geq 0$ for $x\in(t_4,1)$.
 \begin{figure}[H]
\includegraphics[scale=0.8]{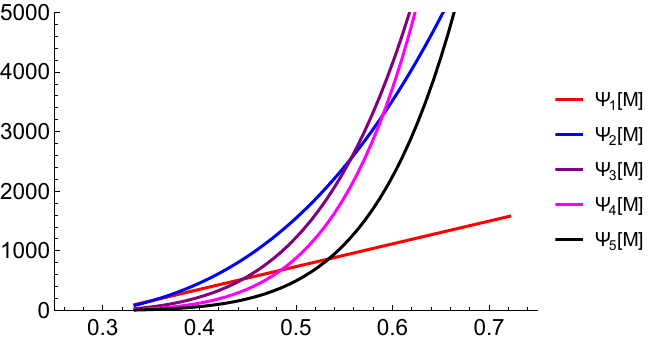}
\caption{The graph of the polynomials $\Psi_j(M)$ $(1\leq j\leq5)$ for $1/3\leq M\leq 1/\log4$}
\label{fig9}
\end{figure}

From (\ref{e7}), we have
\beas\left|H_{2,1}\left(F_{f^{-1}}/2\right)\right|\leq 
 \max\left\{\frac{M^2}{144}\xi_1(t_4)\sqrt{\xi_2(t_4)},\frac{M^2}{144}\xi_1(1)\sqrt{\xi_2(1)}\right\}.\eeas
 A tedious long calculation shows that
 \bs\beas &&\xi_1(t_4)=\dfrac{4(A_1-B_1\sqrt{3(3 - 42 M + 299 M^2 - 1236 M^3 + 2496 M^4)})}{3 M^3 (1 + 39 M)^2}\\[2mm]\text{and}
&& \xi_2(t_4)=\dfrac{3M^2(A_2+7\sqrt{3(3 - 42 M + 299 M^2 - 1236 M^3 + 2496 M^4)})}{(39M^2-12M+2)(B_2+\sqrt{3(3 - 42 M + 299 M^2 - 1236 M^3 + 2496 M^4)})},
 \eeas\es
 where 
\beas\left\{\begin{array}{lll}
A_1=48672 M^5 - 34515 M^4 + 12486 M^3 - 2577 M^2 + 312 M - 18,\\
B_1=507 M^3 - 273 M^2 + 62 M - 6,\\
A_2= 3744 M^3 - 1854 M^2+ 345 M-15 \;\;\text{and}\\
B_2= 234 M^3- 72 M^2+ 21 M-3.\end{array}\right.\eeas
It is clear that $\xi_1(1)\sqrt{\xi_2(1)}=(39M^2-12M+2)$.
Thus, we have 
\bea\label{Q3}\left|H_{2,1}\left(F_{f^{-1}}/2\right)\right|\leq\left\{\begin{array}{lll} 
 \frac{M^2}{144}\xi_1(t_4)\sqrt{\xi_2(t_4)}&\text{for}\;1/3\leq M\leq M_3,\\[2mm]
 \frac{M^2}{144}(39M^2-12+2)&\text{for}\;M_3\leq M\leq 1/\log4,\end{array}\right.\eea
where $M_3(\approx 0.423458)$ is the unique positive root of the equation $\xi_1(t_4)\sqrt{\xi_2(t_4)}=39M^2-12M+2$, as illustrated in {\bf Figure} \ref{fig10}.
\begin{figure}[H]
\includegraphics[scale=0.7]{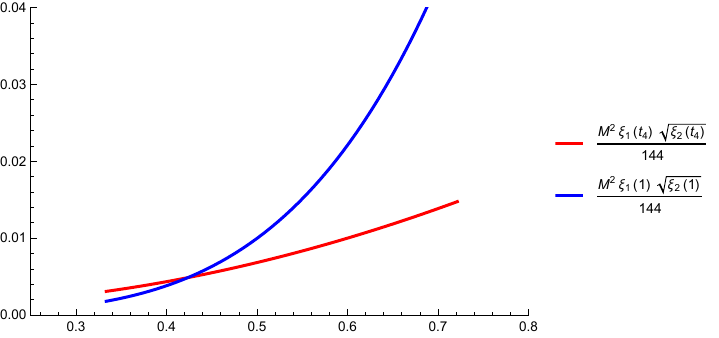}
\caption{The graph of the polynomials $M^2\xi_1(t_4)\sqrt{\xi_2(t_4)}/144$ and $M^2\xi_1(1)\sqrt{\xi_2(1)}/144$ for $1/3\leq M\leq 1/\log4$}
\label{fig10}
\end{figure}
\noindent{\bf Case 4.} It is evident that
\item[(I)] The inequality $(M^2/144)\left(39M^2-12M+2\right)\leq (M^2/36)$ is true for $0<M\leq (1/39)(6+\sqrt{114})\approx 0.427617$;\\
\item[(II)] The inequality $(M^2/36)\leq(M^2/144)\left(39M^2-12M+2\right)$ is true for $M\geq (1/39)(6+\sqrt{114})$;\\
\item[(III)] The inequality$$\frac{M^2}{36}\sqrt{\frac{3(16M^2-6M+1)}{(39M^2-12M+2)}}\leq \frac{M^2}{36}~\text{is true for all $M>0$};$$
\item[(IV)] The inequality $$\frac{M^2\left(\psi_1(M)+\psi_2(M)\sqrt{3(19 -186 M+899 M^2-2172M^3 +2496 M^4)}\right)}{6(117M^3-153M^2+48M-8)^2}\leq \frac{M^2}{36}$$ is equivalent to $\Phi_5(M)\geq 0$,
which is true for $0<M\leq 1/3$, where $\Phi_5(M)=-735140367 M^{12} + 3004133184 M^{11} - 5375600802 M^{10} + 
 5646621132 M^9 - 3923336331 M^8 + 1908662292 M^7 - 666386676 M^6 + 166905792 M^5 - 29086704 M^4 + 3220992 M^3 - 162816 M^2 - 6144 M + 1024$ and it's shown in {\bf Figure} \ref{fig11}.
 \begin{figure}[H]
\begin{minipage}[c]{0.5\linewidth}
\centering
\includegraphics[scale=0.7]{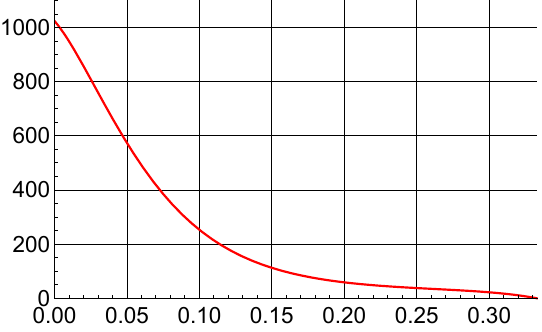}
\caption{The graph of the polynomial $\Phi_5(M)$ for $0<M\leq 1/3$}
\label{fig11}
\end{minipage}
\hfill
\begin{minipage}[c]{0.49\linewidth}
\centering
\includegraphics[scale=0.7]{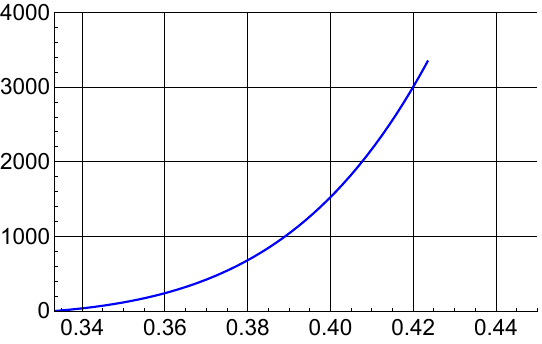}
\caption{The graph of the polynomial $\Phi_8(M)$ for $1/3\leq M\leq M_3$}
\label{fig12}
\end{minipage}
\end{figure}
\item[(V)] The inequality $(64M^2-12M+3)/1872\leq M^2\left(39M^2-12M+2\right)/144$ is equivalent to $507 M^4 - 156 M^3 - 38 M^2 + 12 M - 3\geq 0$, 
which is true for $1/2<M\leq 1/\log4$;
\item[(VI)] A tedious long calculation shows that the inequality
\beas \frac{M^2}{144}\xi_1(t_4)\sqrt{\xi_2(t_4)}\leq \frac{M^2}{36}\;\;\text{is equivalent to}\eeas 
$\Phi_8(M):=\Phi_6(M)+\Phi_7(M)\sqrt{3(2496 M^4-1236 M^3+299 M^2-42 M+3)}\geq 0$ which is true for $1/3\leq M\leq M_3(\approx 0.423458)$, as illustrated in {\bf Figure} \ref{fig12}, where 
\bs\beas&&\Phi_6(M)=-1779161054814 M^{13} + 3824833597416 M^{12} - 3931878351375 M^{11}\\
&& + 2565044468649 M^{10} - 1185433827318 M^9 + 409947682644 M^8- 109189509687 M^7\\
&&+ 22694555717 M^6 - 3684223958 M^5 +461874822 M^4 -43514160 M^3+ 2920752 M^2\\
&&- 125280 M + 2592\\[2mm]\text{and} 
&&\Phi_7(M)=20531017728  M^{11} - 39090726675  M^{10} + 35153704872  M^9 -19771596624  M^8\\
&& + 7732426260 M^7 - 2208185547  M^6 + 470004964  M^5 - 74606178  M^4 + 8663264 M^3\\
&& - 701712 M^2+ 35712  M - 864.\eeas\es
\noindent Utilizing (I)-(VI), we proceed to compare the bounds in (\ref{ee1}), (\ref{ee2}), (\ref{M6}), (\ref{MI6}), (\ref{Q2}) and (\ref{Q3}), which results in the following conclusion:
\bea\label{er2} \left|H_{2,1}\left(F_{f^{-1}}/2\right)\right|\leq\left\{\begin{array}{ll}
\dfrac{M^2}{36},&0<M\leq \dfrac{1}{39}(6+\sqrt{114})\\[2mm]
\dfrac{M^2}{144}\left(39M^2-12M+2\right),&\dfrac{1}{39}(6+\sqrt{114})<M\leq 1/\log4.\end{array}\right.\eea
In order to show that the inequalities in (\ref{er2}) are sharp. For $0<M\leq (6+\sqrt{114})/39$, in view of \textrm{Lemma \ref{lem1}}, we conclude that equality holds for the function $f\in\mathcal{A}$ given by (\ref{e1}), where 
$p\in\mathcal{P}$ is of the form (\ref{er1}) with $p_1=0$ and $p_2=-1$, {\it i.e.,}
\beas p(z)=\frac{1-z^2}{1+z^2}, \;z\in\mathbb{D}.\eeas
For $(6+\sqrt{114})/39<M\leq 1/\log4$, in view of \textrm{Lemma \ref{lem1}}, we conclude that equality holds for the function $f\in\mathcal{A}$ given by (\ref{e1}) with $p\in\mathcal{P}$ 
is of the form $p(z)=(1+z)(1-z), \;z\in\mathbb{D}$.
This completes the proof.
\end{proof}
\noindent{\bf Declarations}\\
\noindent{\bf Acknowledgment:} The work of the author is supported by  University Grants Commission (IN) fellowship (No. F. 44-1/2018 (SA-III)).\\
{\bf Conflict of Interest:} The author have no conflict of interest.\\
{\bf Data availability:} Not applicable.

\end{document}